\def\d{\delta}
\def\D{\mathbb D}
\def\C{\mathbb C}
\def\R{\mathbb R}
\newtheorem{thm}{Theorem}
\newtheorem{prop}[thm]{Proposition}
\newtheorem{definition}[thm]{Definition}
\newtheorem{claim}[thm]{Claim}
\renewcommand{\Im}{\operatorname{\rm{Im}}}
\renewcommand{\Re}{\operatorname{\rm{Re}}}
\newcommand{\eps}{\varepsilon}
\newenvironment{proof*}{\vskip 2mm\noindent {}}{\hfill $\Box$ \vskip 2mm}
\title[Optimal bounds for the Kobayashi distance]
{Optimal bounds for the Kobayashi distance near $\mathcal C^2$-smooth boundary points}
\author{Nikolai Nikolov}
\address{N. Nikolov\\Institute of Mathematics and Informatics\\Bulgarian Academy
of Sciences\\Acad. G. Bonchev 8, 1113 Sofia, Bulgaria
\vspace{1mm}
\newline Faculty of Information Sciences\\
State University of Library Studies and Information Technologies\\
Shipchenski prohod 69A, 1574 Sofia,
Bulgaria}
\email{nik@math.bas.bg}
\author{Pascal J. Thomas}
\address{P.J. Thomas\\
Institut de Math\'ematiques de Toulouse; UMR5219 \\
Universit\'e de Toulouse; CNRS \\
UPS, F-31062 Toulouse Cedex 9, France}
\email{pascal.thomas@math.univ-toulouse.fr}
\thanks{The  first named author was partially supported by the Bulgarian National
Science Fund, Ministry of Education and Science of Bulgaria under contract KP-06-N82/6.}
\subjclass[2020]{32F45}
\begin{document}

\keywords{Kobayashi distance, strong pseudoconvexity, non-semipositive points.}

\begin{abstract} It is shown that the optimal upper and lower bounds for the Kobayashi distance
near $\mathcal C^{2,\alpha}$-smooth strongly pseudoconvex boundary points obtained in \cite{KNO}
remain true in the general strongly pseudoconvex setting. In fact, the upper bound
is extended to the general $\mathcal C^{1,1}$-smooth case. We also give upper and lower bounds
for the Kobayashi distance near non-semipositive boundary points.
\end{abstract}

\maketitle

\section{Introduction}

\subsection{Motivations.}

Invariant (pseudo-)distances are a powerful tool in complex analysis and geometry,
and can be related to many functional properties. In spite of their concise and elegant definition
(recalled below in subsection \ref{invq}),
they are almost never explicitly computable. However, obtaining estimates about
their behavior helps solving questions about the geometry of domains as metric spaces,
such as Gromov hyperbolicity \cite{BB}, visibility \cite{BNT},
Gehring-Hayman type properties \cite{KNT}, \cite{LPW1}, \cite{LPW2}, and ultimately holomorphic maps.

Many estimates are known about infinitesimal invariant {\it metrics} near boundary
points.
Our focus in this paper is on the harder job of obtaining local estimates of the Kobayashi {\it distance}
near $\mathcal C^2$-smooth boundary points, in the  strongly pseudoconvex case (when all eigenvalues
of the Levi form are positive in a neighborhood of the point) and in the non-semipositive case
(when at least one eigenvalue of the Levi form is negative in a neighborhood of the point).

Recently \cite{KNO} very precise upper and lower estimates were obtained for the Kobayashi distance in terms
of explicit Euclidean geometric quantities for the $\mathcal C^{2,\alpha}$-smooth, strongly pseudoconvex
domains, using methods of dilation to reduce the question to the case of the ball.
We provide such estimates (Theorem \ref{thm}), which are critical both near small and large distances
and involve quantities of the same type for the lower and upper bounds, in the case
of  merely $\mathcal C^2$-smooth strongly pseudoconvex boundary points.
Beside the reduction in the regularity required, our proof is shorter and resorts only
to elementary methods.

In addition, we provide estimates in the case of pseudoconcavity
(Theorems \ref{nonlow} and \ref{nonup}), which has recently
proved to be of interest in the context of the question of under which conditions visibility
implies pseudoconvexity \cite{NOT}. The presence of only one
negative eigenvalue is enough to change the situation completely, which may sound surprising,
but not when one remembers that it was so already for the infinitesimal metrics \cite{DNT}.
We note that the distance induced by the Sibony metric will satisfy similar estimates because
of the inequalities between the respective infinitesimal metrics, see \cite{DNT} and references therein.

\subsection{Definitions and notations.}

We will write $x \lesssim y$, or equivalently $y\gtrsim x$, when $x\le C y$ for some constant $C$, and
$x\asymp y$ when $x \lesssim y$ and $x\gtrsim y$. We write
$\langle Z,W \rangle:= \sum_{j=1}^n Z_j \overline W_j$ for the usual Hermitian inner product in $\C^n$,
and $|Z|^2:=\langle Z,Z \rangle$.

The estimates that can be obtained depend on the regularity of the boundary of $D$.
Recall that we say that $\partial D$ is \emph{Dini-smooth} if its unit normal is a Dini-continuous
function of the point. Thus for any $\alpha>0$, if $\partial D$ is  $\mathcal C^{1,\alpha}$-smooth,
it is Dini-smooth, and if the latter property is verified, $\partial D$ is $\mathcal C^1$-smooth.

Let $p\in\partial D$ be a point at which $\partial D$ is differentiable, then $n_p$ stands for the inner unit 
normal vector to $\partial D$.

Let $z \in D$. Then $\d_D (z) =\mbox{dist}(z,\partial D)= \inf_{q\in \partial D} |z-q|$.
When there is a unique  point $q \in \partial D$ such that $|z-q|= \d_D(z)$, we write $q=\pi(z)$.
In that case, $\delta_D$ is differentiable at $z$.
This happens in particular when $\partial D$ is $\mathcal C^{1,1}$-smooth and $\d_D(z)$ is small enough.

At various places, we will need a notion of directional distance to the boundary:
\begin{equation}
\label{dirdist}
\d_D(z;X)=\sup\{\rho>0:z+\lambda X\in D\hbox{ if }|\lambda|<\rho\}.
\end{equation}

We often need to denote the component  of a vector along the complex normal
 to $\partial D$ at the closest point $\pi(z)$ to $z.$
\begin{definition}
\label{subz}
Let $D$ be a domain with differentiable boundary,
  $X\in\C^n,$ and $z\in D$ such that $\pi(z)$ is well-defined. We set
$X_z=2\langle X,\bar\partial\delta_D(z)\rangle = \langle X,n_{\pi(z)}\rangle$.
\end{definition}

Our results will be stated in terms of the following auxiliary quantities.
\begin{equation}
\label{ADBD}
B_D(z,w)=|(z-w)_z|+|z-w|^2+|z-w|\sqrt{\delta_D(z)},\quad A_D(z,w)=\frac{B_D(z,w)}{\sqrt{\d_D(z)\d_D(w)}}.
\end{equation}
By \cite[Remark 7]{KNO}, $A_D(z,w)\asymp A_D(w,z)$ (see also \eqref{8} below).

\subsection{Invariant quantities.}
\label{invq}

Let $D$ be a domain in $\C^n,$ $z,w\in D$ and $X\in\C^n.$ Denote by $\D$ the unit disc.
The Kobayashi-Royden (pseudo)metric $\kappa_D$ and the Lempert function are defined as follows:
$$\kappa_D(z;X)=\inf\{|\alpha| :\exists\varphi\in\mathcal O(\D,D), \varphi(0)=z, \alpha\varphi'(0)=X\},$$
$$l_D(z,w)=\inf\{\tanh^{-1}|\alpha|:\exists\varphi\in\mathcal O(\D,D)
\hbox{ with }\varphi(0)=z,\varphi(\alpha)=w\}.$$
The Kobayashi (pseudo)distance $k_D$ is the largest pseudodistance not exceeding $l_D.$
It turns out that $k_D$ is the integrated form of $\kappa_D,$ that is,
$$k_D(z,w)=\inf_{\gamma}\int_0^1\kappa_D(\gamma(t);\gamma'(t))dt,$$
where the infimum is taken over all absolutely continuous curves
$\gamma:[0,1]\to D$ such that $\gamma(0)=z$ and $\gamma(1)=w.$

\subsection{Previous results.}

The main problem about the boundary behavior of $k_D(z,w)$ and $l_D(z,w)$ is the case when $z$
and $w$ approach the same point.

In \cite[Theorem 7]{NA}, the following upper bound for $k_D$ near Dini-smooth boundary
points of $D$ is proved:
$$k_D(z,w)\le\log\left(1+\frac{2|z-w|}{\sqrt{\d_D(z)\d_D(w)}}\right).$$

Much more is known in the strongly pseudoconvex case.

Denote by $cc_D$ the Carnot-Carath\'eodory distance on $\partial D$ near a strongly pseudoconvex point
(the sub-Riemannian structure
where the horizontal curves are taken to be the complex tangential ones, see e.g. \cite[p. 505]{BB}).

From \cite[(4) and (6)]{KNO}, we see that
\begin{equation}\label{cc}
(1+A_D(z,w))\d_D(z)^{1/2}\d_D(w)^{1/2}\asymp cc_D(\pi(z),\pi(w))^2
+\max(\d_D(z),\d_D(w)),
\end{equation}

So, assuming $|X_z|=|X|$ if $\d_D(z)\gtrsim 1,$
the well-known Balogh-Bonk estimate \cite[Corollary 1.3]{BB} can be also read as follows.

\begin{prop}\label{bb} For any strongly pseudoconvex domain $D$ there exists $C>0$ such that
\begin{equation}\label{1}\log(1+A_D)-C\le k_D\le\log(1+A_D)+C.
\end{equation}
\end{prop}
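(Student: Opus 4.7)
My approach is to obtain the proposition as a direct reformulation, in the $A_D$ coordinates, of the classical Balogh--Bonk asymptotic \cite[Corollary 1.3]{BB}. In its original form, that result states that, for a strongly pseudoconvex $D$ and for $(z,w)$ in some neighborhood of $\partial D\times\partial D$ inside $D\times D$,
$$
\left|k_D(z,w)-\log\frac{cc_D(\pi(z),\pi(w))^2+\max(\delta_D(z),\delta_D(w))}{\sqrt{\delta_D(z)\delta_D(w)}}\right|\le C_1
$$
for some constant $C_1$ depending only on $D$. The plan is to rewrite the expression inside the logarithm using \eqref{cc}, and then to extend the two-sided estimate to all of $D\times D$ by a compactness argument.

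The key step is to invoke \eqref{cc}: there exist $c_1,c_2>0$ (depending on $D$ only) such that
$$
c_1(1+A_D(z,w))\le\frac{cc_D(\pi(z),\pi(w))^2+\max(\delta_D(z),\delta_D(w))}{\sqrt{\delta_D(z)\delta_D(w)}}\le c_2(1+A_D(z,w))
$$
for $(z,w)$ sufficiently close to $\partial D$. Each side is bounded below by a positive constant, so taking logarithms absorbs the multiplicative factors into an additive $O(1)$ error. Substituting into the Balogh--Bonk estimate above yields \eqref{1} in the boundary regime.

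For the complementary regime, where at least one of $z,w$ stays at a fixed positive distance from $\partial D$, the argument is purely qualitative. A strongly pseudoconvex domain is bounded, so on any such ``interior'' set $\delta_D$ is bounded away from zero, and both $k_D$ and $\log(1+A_D)$ are continuous, nonnegative, and bounded above; enlarging $C$ covers this case too and glues the two regimes together.

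I do not anticipate any serious technical obstacle here: the heavy lifting is done by Balogh--Bonk, and the asymptotic \eqref{cc} is imported from \cite[Remark 7]{KNO}. The only content specific to this proposition is the elementary logarithmic rearrangement produced by \eqref{cc} and the routine globalization argument, both of which are one-liners once the previous inputs are in place.
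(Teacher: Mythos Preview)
Your main line of argument---invoking Balogh--Bonk \cite[Corollary 1.3]{BB} and converting via \eqref{cc}---is exactly the paper's approach; the paper simply presents Proposition~\ref{bb} as a restatement of that estimate in the $A_D$ language, so there is nothing to compare on that front.

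However, your globalization step contains a genuine gap. The ``complementary regime'' you describe is the set where \emph{at least one} of $z,w$ stays at a fixed positive distance from $\partial D$; but then the other point is still allowed to approach the boundary. In that situation neither $k_D(z,w)$ nor $\log(1+A_D(z,w))$ is bounded: if $\delta_D(z)\ge\varepsilon_0$ while $\delta_D(w)\to 0$, completeness of $k_D$ on a strongly pseudoconvex domain forces $k_D(z,w)\to\infty$, and likewise $A_D(z,w)\gtrsim \delta_D(w)^{-1/2}\to\infty$. So the claim that ``both $k_D$ and $\log(1+A_D)$ are \dots\ bounded above'' on this set is false, and compactness alone does not close the argument. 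The paper handles this by adopting the convention $|X_z|=|X|$ when $\delta_D(z)\gtrsim 1$, which makes $A_D$ well defined globally and keeps \eqref{cc} valid there (since $cc_D$ is bounded and $\max(\delta_D(z),\delta_D(w))\asymp 1$). Alternatively, you could treat the mixed case directly by matching the known one-sided boundary asymptotic $k_D(z,w)=\tfrac12\log(1/\delta_D(w))+O(1)$ for $z$ in a fixed compact against the elementary estimate $\log(1+A_D(z,w))=\tfrac12\log(1/\delta_D(w))+O(1)$ in that regime. Either route is short, but some such step is required; the pure compactness argument as written does not work.
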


Using a different approach based on complex geodesics, \cite[Corollary 8]{KNO} refines
\eqref{1} (when $A_D\lesssim 1$) in the $\mathcal C^{2,\alpha}$-smooth case.

\begin{prop}\label{kno}
If $p\in\partial D$ is strongly pseudoconvex
and $\mathcal C^{2,\alpha}$-smooth, then there exist $0<c<C$ such that near $p$ one has that
\begin{equation}
\label{2}
\log(1+cA_D)\le k_D\le
\end{equation}
\begin{equation}
\label{3}
l_D\le\log(1+CA_D).
\end{equation}
\end{prop}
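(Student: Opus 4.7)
Since Proposition~\ref{bb} already yields $k_D = \log(1+A_D) + O(1)$, the genuine content of \eqref{2}--\eqref{3} lies in the regime $A_D \lesssim 1$ (where $z,w$ are close to each other and to $p$): it promotes the additive constants into multiplicative ones inside the logarithm. My plan is to localize near $p$ and exploit the fact that $\mathcal C^{2,\alpha}$-smoothness produces a local defining function of the form
$$\rho(z) = -\Re z_n + H(z') + O(|z-p|^{2+\alpha})$$
after a biholomorphic change of coordinates straightening the complex normal and diagonalizing the Levi form. Thus $D$ is approximated, to order strictly greater than $2$, by the Siegel half-space $\Omega = \{\Re z_n > H(z')\}$, on which the Lempert function and Kobayashi distance can be computed explicitly and yield bounds of the form $\log(1 + c A_\Omega)$ and $\log(1 + C A_\Omega)$ with absolute constants. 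The strategy is to transfer these model estimates back to $D$ with a quantitatively small error.

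For the upper bound $l_D \le \log(1 + C A_D)$, I would construct an analytic disc $\varphi : \D \to D$ by pulling back the explicit complex geodesic of $\Omega$ joining the images of $z$ and $w$, and then perturbing it so that $\varphi(\D) \subset D$ rather than merely $\varphi(\D) \subset \Omega$. The perturbation is governed by the Hölder error $O(|z-p|^{2+\alpha})$ in the defining function, which is negligible compared to the intrinsic scales $\sqrt{\delta_D(z)}$ (complex normal) and $\delta_D(z)$ (complex tangential) appearing in $A_D$, and hence does not spoil the $\log(1 + C\,\cdot)$ shape of the bound.

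For the lower bound $k_D \ge \log(1 + c A_D)$, I would invoke Lempert's theorem in this smooth setting to obtain a complex geodesic $\varphi : \D \to D$ with $\varphi(0) = z$, $\varphi(t) = w$ and $k_D(z,w) = \tanh^{-1}|t|$. Such a $\varphi$ extends to $\overline{\D}$ with $\mathcal C^{1,\alpha/2}$ regularity (Lempert--Huang), giving quantitative control on how $\varphi$ approaches $\partial D$. Combining this with the classical sharp infinitesimal estimate
$$\kappa_D(\zeta; X) \gtrsim \frac{|X_\zeta|}{\delta_D(\zeta)} + \frac{|X|}{\sqrt{\delta_D(\zeta)}}$$
valid near strongly pseudoconvex points, and integrating along $\varphi$, one recovers the desired logarithmic lower bound; the normal term contributes the $|(z-w)_z|$ piece of $B_D$ and the tangential term contributes the $|z-w|^2$ and $|z-w|\sqrt{\delta_D(z)}$ pieces.

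The main obstacle on both sides is the comparison step: to land the correction inside the logarithm rather than as an additive $O(1)$ term, the model approximation of $D$ and the boundary regularity of the extremal discs must both come with a strictly positive Hölder exponent. This is exactly what $\mathcal C^{2,\alpha}$ provides and plain $\mathcal C^2$ does not, which is why the authors' Theorem~\ref{thm} requires a genuinely different and more elementary argument to cover the $\mathcal C^2$ case.
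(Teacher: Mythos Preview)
The paper does not prove Proposition~\ref{kno}; it is quoted from \cite[Corollary~8]{KNO} as prior work, and the introduction notes that \cite{KNO} uses ``a different approach based on complex geodesics.'' Your sketch, built on the Siegel half-space model, perturbed model discs for the upper bound, and Lempert--Huang regularity of extremal discs for the lower bound, matches that description and is a reasonable outline of how the \cite{KNO} argument proceeds.

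What this paper does prove is the stronger Theorem~\ref{thm}, extending \eqref{2}--\eqref{3} to the plain $\mathcal C^2$ case, and its method is quite different from yours. For the upper bound it proves the general $\mathcal C^{1,1}$ estimate of Theorem~\ref{smooth} by integrating $\kappa_D$ along the straight segment $[z,w]$ when $A_D$ is small, and along a three-piece path through the normal points $z',w'$ at height $3B_D$ when $A_D$ is large; no model domain or analytic disc perturbation is needed. For the lower bound with $A_D\lesssim 1$, after localizing to a strongly convex image $G$, the paper does not touch complex geodesics or their boundary regularity at all: it uses the elementary inequality $k_D(z,w)\ge\tfrac12\log(1+1/\delta_D(z;z-w))$ from \cite{NM} together with $1/\delta_D(z;z-w)\asymp\hat A_D(z,w)\ge A_D(z,w)$. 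The payoff is exactly that no H\"older exponent is required; your route, as you yourself point out, needs $\alpha>0$ both for the model approximation and for Lempert--Huang to apply.

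One genuine point to tighten in your sketch: you ``invoke Lempert's theorem'' on $D$ itself, but $D$ is only strongly pseudoconvex near $p$, not convex. Both the existence of complex geodesics realizing $k_D$ and the Lempert--Huang boundary regularity are theorems about (strongly) convex domains. You need the preliminary step of passing to a strongly convex local image $G=\Phi(D\cap U)$ and verifying that $A_D\asymp A_G\circ\Phi$ and $k_D\asymp k_{D\cap U}$ near $p$ --- precisely the reduction the paper carries out via Proposition~\ref{c3} and \cite[Theorem~1.1]{NT1}. Without this, the appeal to Lempert is not justified.
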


\section{Results}

There is a partial converse to Proposition \ref{kno}.

\begin{prop}\label{spc} If \eqref{2} holds near a $\mathcal C^2$-smooth point $p\in\partial D,$
then $p$ is strongly pseudoconvex.
\end{prop}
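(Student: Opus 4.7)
I will argue by contrapositive: suppose $p$ is not strongly pseudoconvex, so that the Levi form $L$ of $\partial D$ at $p$ on the complex tangent space admits a unit vector $T$ with $L(T,T)\le 0$. The strategy is to exhibit sequences $z_t,w_t\to p$ with $A_D(z_t,w_t)\asymp 1$ (so that \eqref{2} would force $k_D(z_t,w_t)\ge\log(1+c)>0$) together with explicit analytic discs witnessing $k_D(z_t,w_t)\to 0$, a contradiction.

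First I would normalize coordinates near $p$: translate $p$ to $0$, rotate unitarily so that the inner unit normal is $e_1$ and $T=e_2$, and absorb the holomorphic quadratic terms by a polynomial biholomorphism, so that a $\mathcal C^2$ defining function takes the form $\rho(z)=-2\Re z_1+L(z,z)+o(|z|^2)$, with $L$ Hermitian. For small $t>0$ set $z_t:=te_1$ and $w_t:=te_1+\sqrt t\,e_2$. Direct substitution gives $\rho(z_t)=-2t+o(t)$ and $\rho(w_t)=-2t+L(e_2,e_2)t+o(t)\asymp -t$, so $\delta_D(z_t),\delta_D(w_t)\asymp t$; the $\mathcal C^2$ regularity yields $n_{\pi(z_t)}=e_1+o(1)$ as $t\to 0$, hence $(z_t-w_t)_{z_t}=o(\sqrt t)$; so from \eqref{ADBD}, $B_D(z_t,w_t)\asymp t$ and $A_D(z_t,w_t)\asymp 1$, as required.

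Next I would consider the linear disc $\varphi_t(\zeta):=z_t+r(t)\zeta e_2$. If $\varphi_t(\D)\subset D$, then $k_D(z_t,w_t)\le l_D(z_t,w_t)\le \tanh^{-1}(\sqrt t/r(t))$, so it suffices to produce $r(t)$ with $r(t)/\sqrt t\to\infty$ making the disc lie in $D$. Computing,
$$\rho(\varphi_t(\zeta))=-2t+L(e_2,e_2)\,r(t)^2|\zeta|^2+O\bigl(tr(t)\bigr)+o\bigl(r(t)^2+t^2\bigr).$$
When $L(e_2,e_2)<0$, a small fixed $r(t)\equiv r_0$ works: the negative Levi term absorbs all the error once $r_0$ is chosen small enough relative to $|L(e_2,e_2)|$ and the modulus of the $o$-remainder.

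The main obstacle is the borderline case $L(e_2,e_2)=0$, in which only $-2t$ counters the $o(|z|^2)$ remainder. Writing that remainder as $\eps(|z|)|z|^2$ with $\eps(s)\to 0$ as $s\to 0$, I would choose $r(t)\to 0$ implicitly by $\eps(r(t))\,r(t)^2=t/4$; such $r(t)$ exists since $s\mapsto \eps(s)s^2$ decays strictly faster than $s^2$, and it automatically satisfies $r(t)^2/t=1/(4\eps(r(t)))\to\infty$, i.e.\ $r(t)\gg\sqrt t$. For this $r(t)$ the displayed expression is bounded by $-2t+Ctr(t)+t/4\le -t$ once $t$ is small, so $\varphi_t(\D)\subset D$ and $\sqrt t/r(t)=2\sqrt{\eps(r(t))}\to 0$; therefore $k_D(z_t,w_t)\to 0$, contradicting the lower bound in \eqref{2} and completing the proof.
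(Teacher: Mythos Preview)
Your argument is correct and is essentially a direct, self-contained variant of the paper's proof. Both arguments rest on the same geometric fact: if the Levi form is non-positive in a complex tangential direction $T$, then the complex line through a point $z$ on the inner normal in direction $T$ remains in $D$ far beyond distance $\sqrt{\delta_D(z)}$. The paper packages this via the infinitesimal metric: it invokes \cite[Proposition 2]{NP} to deduce from \eqref{2} the bound $\kappa_D(z;X)\ge c|X|/\sqrt{\delta_D(z)}$, and then contradicts this with $\kappa_D(z;X)\le 1/\delta_D(z;X)$ together with $\sqrt{\delta_D(z)}/\delta_D(z;X)\to 0$ along the normal. You stay at the level of the distance: you pick explicit points $z_t,w_t$ with $A_D(z_t,w_t)\asymp 1$ and exhibit analytic discs giving $l_D(z_t,w_t)\to 0$. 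Your route avoids the external citation at the price of the extra (easy) computation of $A_D(z_t,w_t)$; the paper's route is shorter but less self-contained.

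Two small remarks. First, your normalization uses a local polynomial biholomorphism, under which $k_D$ only decreases (passing to $D\cap U$) while $A_D$ is only preserved up to a multiplicative constant; this is exactly the content of Propositions~\ref{c2}--\ref{c3} in the paper and is what makes the transfer of \eqref{2} to the new coordinates legitimate --- you use it implicitly, but it is worth saying. Second, in the borderline case $L(e_2,e_2)=0$, the implicit definition of $r(t)$ via $\eps(r(t))r(t)^2=t/4$ is slightly delicate (e.g.\ if $\eps$ vanishes on an interval). A cleaner formulation, which also unifies the two cases, is to observe that for every fixed $M>0$ one has $\rho(te_1+M\sqrt t\,\zeta e_2)=-2t+L(e_2,e_2)M^2t|\zeta|^2+o(t)\le -2t+o(t)<0$ for $t$ small; hence $l_D(z_t,w_t)\le\tanh^{-1}(1/M)$ eventually, so $k_D(z_t,w_t)\to 0$. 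This is precisely the statement $\sqrt{\delta_D(z)}/\delta_D(z;T)\to 0$ that the paper uses.
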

This is proved in Section \ref{pftup}.

Our goal is to extend Propositions \ref{bb} and \ref{kno}.

\begin{thm}
\label{thm}
\eqref{2} and \eqref{3} hold near any
strongly pseudoconvex point.
\end{thm}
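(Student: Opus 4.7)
The plan is to prove \eqref{2} and \eqref{3} separately in the regimes $A_D(z,w)\gtrsim 1$ and $A_D(z,w)\lesssim 1$. In the large regime the quantities $\log(1+cA_D)$ and $\log(1+CA_D)$ coincide with $\log(1+A_D)$ up to additive constants, so \eqref{2} follows directly from the Balogh--Bonk estimate \eqref{1}, and a single-disc construction in the spirit of \cite[Theorem 7]{NA} yields the corresponding upper bound for $l_D$. We can thus focus on the regime $A_D(z,w)\lesssim 1$, where $\log(1+cA_D)\asymp cA_D$, reducing matters to
$$k_D(z,w)\gtrsim A_D(z,w)\quad\text{and}\quad l_D(z,w)\lesssim A_D(z,w).$$
Fix local coordinates near $p$ with $\pi(z)=0$, inner normal along $e_1$, and $\mathcal C^2$ defining function $\rho(\zeta)=-\Re\zeta_1+L(\zeta',\bar\zeta')+r(\zeta)$, where $L$ is positive-definite Hermitian (the Levi form at $p$) and $r(\zeta)=o(|\zeta|^2)$. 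Uniform continuity of $\partial\bar\partial\rho$ then supplies inscribed and circumscribed comparison ellipsoids at every nearby boundary point, with shape uniformly close to the Siegel model.

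For \eqref{3} I would build an explicit disc $\varphi\in\mathcal O(\D,D)$ with $\varphi(0)=z$, $\varphi(\alpha)=w$, and $|\alpha|\lesssim A_D(z,w)$, in three pieces: (i) slide $z$ along the inner real normal at $\pi(z)$ to a point $z^\ast$ at depth comparable to $\d_D(w)$; (ii) move tangentially from $z^\ast$ to a point $w^\ast$ produced similarly for $w$, remaining inside an interior osculating ellipsoid at $\pi(z)$; (iii) slide from $w^\ast$ to $w$. Each normal piece contributes $O\bigl(|(z-w)_z|/\sqrt{\d_D(z)\d_D(w)}\bigr)$ to $|\alpha|$, while the tangential piece, estimated via comparison with the Siegel half-space whose Lempert function is explicit, contributes the two remaining summands of $A_D$. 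Möbius composition of the three parameters produces a single disc.

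For \eqref{2} I would use the standard infinitesimal lower bound
$$\kappa_D(\zeta;X)\gtrsim\frac{|X_\zeta|}{\d_D(\zeta)}+\frac{|X|}{\sqrt{\d_D(\zeta)}},$$
valid near $p$ in the $\mathcal C^2$-smooth strongly pseudoconvex setting, and integrate it along an almost-minimizing absolutely continuous path $\gamma$ from $z$ to $w$. Once $A_D\lesssim 1$, the upper bound just constructed forces such a $\gamma$ to remain in a Koranyi-type neighborhood of $\{z,w\}$, on which $\d_D(\gamma(t))\asymp\d_D(z)\asymp\d_D(w)$ and the complex-normal component of $\gamma'$ integrates essentially to $(z-w)_z$. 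The two summands of the infinitesimal estimate then recover the three summands of $B_D(z,w)$ divided by $\sqrt{\d_D(z)\d_D(w)}$, which is exactly $A_D(z,w)$.

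The main obstacle relative to \cite{KNO} is the loss of Hölder regularity: complex geodesics à la Lempert are not available in the $\mathcal C^2$ setting, and the remainder $r(\zeta)=o(|\zeta|^2)$ is uniformly small but not Hölder-small, so the comparison ellipsoids only osculate to leading order. The cure is to pass to the Kobayashi scale via $\zeta_1\mapsto\d_D(z)\tilde\zeta_1$, $\zeta'\mapsto\sqrt{\d_D(z)}\tilde\zeta'$, which fixes the Siegel model and turns $r$ into a perturbation that tends to $0$ in $\mathcal C^0$ on bounded sets as $\d_D(z)\to 0$; for $A_D(z,w)\lesssim 1$ this is enough to recover the constants in both \eqref{2} and \eqref{3}. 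The upper bound in fact uses only an interior osculating ball, so this part of the argument will also carry through under the weaker $\mathcal C^{1,1}$ hypothesis announced in the abstract.
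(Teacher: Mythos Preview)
The serious gap is in your argument for \eqref{3}. You describe three analytic discs joining $z\to z^\ast\to w^\ast\to w$ and then assert that ``M\"obius composition of the three parameters produces a single disc''. No such operation exists: the Lempert function fails the triangle inequality precisely because discs through consecutive pairs of points cannot in general be spliced into one holomorphic map $\D\to D$ passing through $z$ and $w$. Your three-piece construction therefore bounds $k_D$, not $l_D$; the same defect appears in the large-$A_D$ regime, since \cite[Theorem~7]{NA} is a $k_D$ estimate whose proof chains discs through intermediate points. The paper's device is to make the passage from $l_D$ to $k_D$ legitimate: near a $\mathcal C^2$ strongly pseudoconvex point there is a local biholomorphism $\Phi$ from $D\cap U$ onto a strongly \emph{convex} domain $G$, so $l_D\le l_{D\cap U}$, the latter equals $l_G$ transported by $\Phi$, and on $G$ Lempert's theorem gives $l_G=k_G$. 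Theorem~\ref{smooth} then bounds $k_G$ by $\log(1+C_0A_G)$, and $A_G\asymp A_D$ under $\Phi$ by Proposition~\ref{c3}. This reduction to convexity plus Lempert is the missing idea in your proposal.

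For \eqref{2} your route via integrating the infinitesimal lower bound along a near-minimizer is different from the paper's but can be made to work; the confinement step ``$\d_D(\gamma(t))\asymp\d_D(z)\asymp\d_D(w)$'' is the part that needs a genuine argument (bound the total variation of $\log\d_D(\gamma)$ by the first summand of the integral, then control the drift of $n_{\pi(\gamma(t))}$). The paper bypasses all of this: after the same convex reduction it invokes the one-line estimate $k_D(z,w)\ge\tfrac12\log\bigl(1+1/\d_D(z;z-w)\bigr)$ from \cite{NM} and reads off $1/\d_D(z;z-w)\asymp |(z-w)_z|/\d_D(z)+|z-w|/\sqrt{\d_D(z)}\gtrsim A_D(z,w)$ (taking $\d_D(w)\ge\d_D(z)$) directly from the pair of tangent balls sandwiching $\partial D$ at $\pi(z)$, with no control of near-geodesics required.
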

The proof of \eqref{2} is given in Section \ref{pftup}.

In fact, \eqref{3} is an easy consequence of the following general statement.
Set
$$
\chi_{D,p}=\limsup_{\partial D\ni p',p''\to p}\frac{|n_{p'}-n_{p''}|}{|p'-p''|}<\infty,
$$
$$A_{D,p}(z,w)=\frac{|(z-w)_z|+\chi_{D,p}|z-w|^2+|z-w|\sqrt{\chi_{D,p}\delta_D(z)}}
{\sqrt{\d_D(z)\d_D(w)}}.
$$

\begin{thm}
\label{smooth}
There exists $C_0>0$ such that for any domain $D$ (in any $\C^n$) and any 
$\mathcal C^{1,1}$-smooth point $p\in\partial D$ with $\chi_{D,p}>0$ one may
find a neighborhood $U$ of $p$ such that
\begin{equation}\label{4}
k_D(z,w)\le\log(1+C_0A_{D,p}(z,w)),\quad z,w\in D\cap U.
\end{equation}
\end{thm}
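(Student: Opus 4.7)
The plan is to exploit the interior tangent ball property implied by $\mathcal{C}^{1,1}$ regularity and the finiteness of $\chi:=\chi_{D,p}$. The Lipschitz bound on the Gauss map produces, in a small neighborhood $U$ of $p$, a radius $r\asymp 1/\chi$ such that for every $q\in\partial D\cap U$ the Euclidean ball $B_q:=B(q+r\,n_q,r)$ is contained in $D$ and tangent to $\partial D$ at $q$. This reduces the problem, via the monotonicity of $k$, to estimating the Kobayashi distance on a single Euclidean ball, where a closed-form expression is available through the M\"obius automorphisms.

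I would split the argument according to the size of $|z-w|$ relative to $r$. In the regime $|z-w|\gtrsim r$, the term $\chi|z-w|^2$ dominates $|z-w|$ in the numerator of $A_{D,p}(z,w)$, so the Dini-smooth upper bound $k_D(z,w)\le\log\bigl(1+2|z-w|/\sqrt{\delta_D(z)\delta_D(w)}\bigr)$ from \cite{NA} yields \eqref{4} at once. In the opposite regime $|z-w|\ll r$, I would take $B$ to be (possibly a slight shift of) the osculating ball $B_{\pi(z)}$ and apply $k_D(z,w)\le k_B(z,w)$ together with the standard estimate
$$
k_B(u,v)\asymp \log\left(1+\frac{|(v-u)_u|+r^{-1}|u-v|^2+|u-v|\sqrt{\delta_B(u)/r}}{\sqrt{\delta_B(u)\,\delta_B(v)}}\right)
$$
for the Kobayashi distance of a Euclidean ball. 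After substituting $r^{-1}\asymp\chi$ and checking that $\delta_B(z)=\delta_D(z)$ and $\delta_B(w)\asymp\delta_D(w)$, the right-hand side takes the form $\log(1+C_0A_{D,p}(z,w))$ as required.

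The main obstacle is justifying $\delta_B(w)\asymp\delta_D(w)$ with constants independent of $z$ and $w$ in the relevant range. Since $\partial B$ and $\partial D$ agree only up to second order at $\pi(z)$, the ratio $\delta_B(w)/\delta_D(w)$ could degrade once $\pi(w)$ moves away from $\pi(z)$. To handle this, I would either shift the center of $B$ to an intermediate boundary point so that both $z$ and $w$ sit at the correct depth, or bridge through a point $q\in B_{\pi(z)}\cap B_{\pi(w)}$ via the triangle inequality for $k_D$, absorbing the resulting additive term into the constant $C_0$.
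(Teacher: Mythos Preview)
Your plan has the right ingredients --- interior tangent balls and monotonicity of $k$ --- but the case split you propose is not the one that makes the argument work, and the obstacle you identify is real and not resolved by either of your suggested fixes.

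First, since the theorem lets you choose $U$, you may shrink it so that $|z-w|\ll r$ always holds; your first case is vacuous. The relevant dichotomy is between $A_D(z,w)\lesssim 1$ and $A_D(z,w)\gtrsim 1$ (equivalently, $2B_D(z,w)\le\sqrt{\delta_D(z)\delta_D(w)}$ versus the opposite), which is what the paper uses. In the first regime your single-ball idea actually works: with $B=B_{\pi(z)}$ and (say) $\delta_D(w)\le\delta_D(z)$, the condition $2B_D\le\sqrt{\delta_D(z)\delta_D(w)}$ forces the whole segment $[z,w]$ to stay at depth $\gtrsim\delta_D(z)$ inside $B$, and in particular $\delta_B(w)\asymp\delta_D(z)\asymp\delta_D(w)$ automatically. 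So your worry about $\delta_B(w)/\delta_D(w)$ degrading is unfounded precisely when $A_D\lesssim 1$.

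It is in the complementary regime $A_D\gtrsim 1$ that your approach breaks down. Here $w$ can lie outside $B_{\pi(z)}$ altogether (take $\partial D$ flat, $z$ and $w$ at the same small depth $\varepsilon$, tangential separation $t>\sqrt{2r\varepsilon}$), so $k_{B_{\pi(z)}}(z,w)$ is not even defined. Shifting $B$ to an intermediate tangent point cannot force both $\delta_B(z)\asymp\delta_D(z)$ and $\delta_B(w)\asymp\delta_D(w)$ simultaneously when these depths are very different. Your second fix --- bridging via a point $q\in B_{\pi(z)}\cap B_{\pi(w)}$ and ``absorbing the additive term into $C_0$'' --- is the right idea, but you must say which $q$ and why the additive term is $O(1)$; a generic choice fails. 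The paper takes $z'=\pi(z)+3B_D(z,w)n_{\pi(z)}$ and $w'=\pi(w)+3B_D(z,w)n_{\pi(w)}$, i.e.\ lifts both points to the specific depth $3B_D(z,w)$. This scale is essential: it is large enough that the segment $[z',w']$ stays at depth $\gtrsim B_D(z,w)$ (this requires estimating $|X'|^2$ and $|X'_z|$ in terms of $B_D$), yielding $k_D(z',w')=O(1)$; and it is small enough that the normal moves contribute $k_D(z,z')+k_D(w,w')\le\log A_D+O(1)$. Only because $A_D\gtrsim 1$ in this case can the resulting $O(1)$ be absorbed into $\log(1+C_0A_D)$. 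Your proposal omits all three of these points: the correct split, the choice of depth $\asymp B_D$, and the estimate showing $k_D(z',w')=O(1)$.
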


It follows immediately from the proof of Theorem \ref{smooth}, given
in Section \ref{pftsm} below,
that if $\chi_{D,p}=0,$ then the same holds replacing $\chi_{D,p}$
in $A_{D,p}$ by any $\varepsilon>0.$

Recall now that the Carath\'eodory (pseudo)distance is defined by
$$c_D(z,w)=\sup\{\tanh^{-1}|f(w)|: f\in\mathcal O(D,\D), f(z)=0\}.$$

One can deduce from Theorem \ref{thm}  its global version corresponding to
\cite[Theorem 1]{KNO}. We skip the proof because the arguments are the same as in \cite{KNO}.

\begin{prop} For any strongly pseudoconvex domain $D$ there exist $0<c<C$ such that
$$
\log(1+cA_D)\le c_D\le k_D\le l_D\le\log(1+CA_D).
$$
\end{prop}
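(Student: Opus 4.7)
The plan is to pass from the local statement of Theorem~\ref{thm} to the global one by a standard compactness/localization argument, as in the proof of \cite[Theorem~1]{KNO}. Since $c_D\le k_D\le l_D$ is tautological, we need only establish the two outer inequalities.

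For the upper bound $l_D\le\log(1+CA_D)$: strong pseudoconvexity of $D$ means Theorem~\ref{thm} applies at every point of $\partial D$. By compactness of $\partial D$ a finite subcover yields uniform constants $\eta,\eps_0,C_0$ such that the estimate \eqref{3} holds whenever $\d_D(z),\d_D(w)<\eps_0$ and $|z-w|<\eta$. In the complementary regime $|z-w|$ is bounded below, so the Dini-smooth estimate
$$l_D(z,w)\le\log\bigl(1+2|z-w|/\sqrt{\d_D(z)\d_D(w)}\bigr)$$
(the proof of \cite[Theorem~7]{NA} via a single affine disc bounds $l_D$, not merely $k_D$) delivers the desired conclusion once one checks that $A_D(z,w)\gtrsim 1/\sqrt{\d_D(z)\d_D(w)}\gtrsim 1$ in this regime, so that $|z-w|/\sqrt{\d_D(z)\d_D(w)}\lesssim A_D(z,w)$.

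For the lower bound $\log(1+cA_D)\le c_D$: the key ingredient is the classical existence, at each strongly pseudoconvex $p\in\partial D$, of a global holomorphic peak function $f_p\in\mathcal O(\overline D,\overline\D)$ with $f_p(p)=1$, $|f_p|<1$ on $\overline D\setminus\{p\}$, and the quantitative local behavior
$$1-\Re f_p(z)\asymp \d_D(z)+|(z-p)_p|+|z-p|^2$$
near $p$, produced e.g.\ by Forn\ae ss' embedding theorem or directly by H\"ormander's $L^2$-technique applied to a local holomorphic support function. Testing the defining supremum for $c_D(z,w)$ with $f=f_{\pi(w)}$ gives
$$c_D(z,w)\ge\tanh^{-1}\bigl|(f(z)-f(w))/(1-\overline{f(w)}f(z))\bigr|,$$
and inserting the asymptotic reproduces $\log(1+c_0A_D(z,w))$ in the local regime where both points sit in a neighborhood of a common boundary point. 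In the complementary regime $A_D$ is bounded below by a positive constant, and the same family of peak functions together with a Balogh--Bonk-type argument (as in Proposition~\ref{bb}, which transfers to $c_D$ once one has peak functions at every boundary point) gives the required inequality after reducing~$c$ so as to absorb the additive constant into the logarithm.

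The main technical point is the sharp quantitative profile of the peak function; once this is in hand, both the globalization step for the upper bound and the M\"obius computation producing $A_D$ for the lower bound are routine.
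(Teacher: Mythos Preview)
The paper itself does not give a proof of this proposition: it says only that ``the arguments are the same as in \cite{KNO}''. So the comparison is really with the strategy of \cite{KNO}, which passes through localization and Lempert's theorem on strongly convex domains (or, for $c_D$, through a global holomorphic embedding into a strictly convex domain). Your upper-bound argument via compactness plus the Dini-smooth estimate from \cite{NA} is correct and standard, and matches that scheme.

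The lower bound, however, has a genuine gap. Testing $c_D(z,w)$ with a \emph{single} scalar peak function $f_{\pi(w)}$ does \emph{not} reproduce $\log(1+cA_D)$. A peak function of the type you describe satisfies $1-|f_p(z)|^2\asymp \delta_D(z)+|(z-p)_T|^2$ and $\arg f_p(z)\asymp \Im(z-p)_p$; plugging this into the M\"obius quotient yields, in the small-$A_D$ regime with $\delta_D(z)\asymp\delta_D(w)=:\delta$,
\[
c_D(z,w)\ \gtrsim\ \frac{|(z-w)_w|+|z-w|^2}{\delta},
\]
whereas $A_D(z,w)\asymp \dfrac{|(z-w)_w|+|z-w|^2+|z-w|\sqrt{\delta}}{\delta}$. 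The tangential term $|z-w|/\sqrt{\delta}$ is missing. Concretely, take $z,w$ with $(z-w)_w=0$ and $|z-w|\ll\sqrt{\delta}$: then $A_D\asymp|z-w|/\sqrt{\delta}$, but the peak-function bound gives only $|z-w|^2/\delta=A_D^2\ll A_D$. A scalar holomorphic function into $\mathbb D$ cannot detect complex tangential displacement at first order; this is exactly why one needs a vector-valued map. The usual remedy (and presumably the route in \cite{KNO}) is Forn\ae ss' embedding of $\overline D$ into a strictly convex $\Omega\subset\C^N$: then $c_D\ge c_\Omega=k_\Omega$ by Lempert, and $A_\Omega\circ F\asymp A_D$ by the analogue of Proposition~\ref{c3}, which recovers the full $A_D$. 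Your ``routine M\"obius computation'' claim is therefore incorrect as stated; you need either the embedding, or a family of peak functions with base point varying so as to pick up the tangential direction.
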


Finally, we will consider the opposite case to strong pseudoconvexity.
Recall that $s: \C^n \longrightarrow \R$ is a \emph{defining function} for $D$
if $D=\{z\in\C^n: s(z)<0\}$. We call a $\mathcal C^2$-smooth point
$p\in\partial D$ {\it non-semipositive} if, for $s$ a defining function of $D$
(it does not depend which one), $\partial{\overline\partial}s (X,\overline X)(p)<0$ 
for some vector $X\in T_p^{\C}(\partial D).$

The \emph{signed distance} to $\partial D$
is the special defining function given by $r_D=-\d_D$ on $D$ and $r_D=\d_D$ on $\C^n\setminus D$.

Set
\begin{equation}
\label{defHD}
H_D(z,w)=\frac{|(z-w)_z|}{\sqrt{|(z-w)_z|}+|z-w|+\sqrt{\d_D(z)}} + |z-w|.
\end{equation}
Note that $H_D(z,w)\asymp \frac{B_D(z,w)}{\sqrt{|(z-w)_z|}+|z-w|+\sqrt{\d_D(z)}}$
with $B_D$ as in \eqref{ADBD}.
First notice that $H_D(z,w)\asymp H_D(w,z)$.

Indeed, it already follows from \eqref{8} that $B_D(z,w)\asymp B_D(w,z)$.
Now consider the denominator
$\sqrt{\d_D(z)}+\sqrt{|(z-w)_z|}+|z-w|$.

From \eqref{zcomp}, $\sqrt{|(z-w)_z|}\le \sqrt{|(z-w)_w|}+ |z-w|$,
and from  \eqref{diffbound},
\[
\d_D(z)\le \d_D(w)+ |(z-w)_z| + |z-w|^2,
\]
so that
\[
\sqrt{\d_D(z)}+\sqrt{|(z-w)_z|}+|z-w| \le
3 \left(  \sqrt{\d_D(w)}+\sqrt{|(z-w)_w|}+|z-w|\right),
\]
then switching the variables $z, w$, we get the result.

The quantity provides a  holds regardless of pseudoconvexity hypotheses (and is of course
much smaller than the one in \eqref{2}, so far from optimal in the strongly pseudoconvex case).

\begin{thm}
\label{nonlow}
Let $p$ be a $\mathcal C^{1,1}$-smooth point in $\partial D,$
then for $z,w$ close enough to $p$, $k_D(z,w) \gtrsim H_D(z,w)$ .
\end{thm}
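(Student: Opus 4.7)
The plan is to split $H_D$ into its two summands and establish each as a lower bound on $k_D(z,w)$.

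\textbf{The summand $|z-w|$.} In coordinates with $p = 0$ and $n_p = (1, 0, \ldots, 0)$, the linear coordinate functions, normalized by the diameter of a small neighborhood of $p$, give bounded holomorphic maps $D\to\D$ that separate $z$ and $w$ at Euclidean rate, yielding $c_D(z,w) \gtrsim |z-w|$ and hence $k_D(z,w) \gtrsim |z-w|$. Equivalently, $\kappa_D(\zeta;X) \gtrsim |X|$ uniformly on a relatively compact piece of $D$ near $p$.

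\textbf{The fractional summand.} The key input is the pointwise lower bound valid for $\zeta$ close to $p$:
\[
\kappa_D(\zeta;X) \;\gtrsim\; \frac{|X_\zeta|}{\d_D(\zeta)}.
\]
This is obtained by applying the Schwarz–Pick lemma for the left half-plane to $\tilde g\circ f$, where $f\in\mathcal O(\D,D)$ is any competitor with $f(0)=\zeta$ (image localized in $B(p,\eps)$), and $\tilde g(w) = \langle w-p,\,2\bar\partial r_D(p)\rangle - C\eps^2$ is the complex-linear functional shifted so that $\Re(\tilde g)<0$ on $D\cap B(p,\eps)$: the $\mathcal C^{1,1}$ expansion $r_D(w) = \Re\langle w-p,\,2\bar\partial r_D(p)\rangle + O(|w-p|^2)$ makes this possible.

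To integrate, I take any absolutely continuous curve $\gamma:[0,1]\to D$ from $z$ to $w$ of Euclidean length $L$. The Lipschitz continuity of $\zeta\mapsto n_{\pi(\zeta)}$ (from $\mathcal C^{1,1}$) gives
\[
\int_0^1 |\gamma'(t)_{\gamma(t)}|\,dt \;\geq\; |(z-w)_z| - CL^2, \qquad \max_t \d_D(\gamma(t)) \;\leq\; \d_D(z) + L,
\]
and hence
\[
\int_0^1 \kappa_D(\gamma;\gamma')\,dt \;\gtrsim\; \frac{|(z-w)_z| - CL^2}{\d_D(z) + L}.
\]
Combining this with the trivial bound $\int_0^1 \kappa_D(\gamma;\gamma')\,dt \gtrsim L$ from the previous step and taking the infimum over $\gamma$, the optimal balance at $L \asymp \sqrt{|(z-w)_z|}$ produces
\[
k_D(z,w) \;\gtrsim\; \frac{|(z-w)_z|}{\sqrt{|(z-w)_z|} + \d_D(z)},
\]
which dominates the desired fractional summand of $H_D$ (whose denominator is at least $\sqrt{|(z-w)_z|}+\sqrt{\d_D(z)}$ and so larger).

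\textbf{Main obstacle.} The delicate step is making the Schwarz–Pick argument uniform across all competitors: discs $f\in\mathcal O(\D,D)$ whose image leaves $B(p,\eps)$ are not directly amenable to the half-plane framework. These must be controlled via Cauchy-formula derivative estimates and balanced against the local bound by an optimal choice of $\eps$ depending on $|X|/|X_\zeta|$; this balancing is precisely what produces the combined denominator $\sqrt{|(z-w)_z|} + |z-w| + \sqrt{\d_D(z)}$ in $H_D$.
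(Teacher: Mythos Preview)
Your argument rests on the pointwise bound $\kappa_D(\zeta;X)\gtrsim |X_\zeta|/\d_D(\zeta)$, and this bound is false at the points covered by the theorem. The half-plane functional $\tilde g$ is only holomorphic on $D\cap B(p,\eps)$, so the Schwarz--Pick step yields a lower bound for $\kappa_{D\cap B(p,\eps)}$, not for $\kappa_D$; localization results of the type $k_{D\cap U}<k_D+C$ only transfer such bounds up to an additive constant depending on $\eps$, which destroys the estimate in the small regime. Once one balances the local half-plane bound against the Cauchy estimate for discs leaving $B(p,\eps)$ (your ``Main obstacle''), what survives is
\[
\hat\kappa_D(\zeta;X)\;\gtrsim\;\frac{|X_\zeta|}{\sqrt{\d_D(\zeta)}}+|X|,
\]
i.e.\ the bound of \cite{DNT} with a \emph{square root} in the denominator. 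That this cannot be improved to $|X_\zeta|/\d_D(\zeta)$ is clear from Theorem~\ref{nonup}: near a non-semipositive point, moving along the normal from depth $\d$ to $2\d$ has Kobayashi cost $\lesssim\sqrt{\d}$, whereas your claimed metric bound would force cost $\gtrsim\int_\d^{2\d}t^{-1}\,dt\asymp 1$.

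With the correct $\sqrt{\d_D}$ bound, your integration scheme is not sharp enough. In the model situation $\d_D(z)\approx 0$, $|(z-w)_z|\approx|z-w|=\eta$, your inequalities give only
\[
\int\kappa_D(\gamma;\gamma')\;\gtrsim\;\max\Bigl(L,\ \tfrac{\eta-CL^2}{\sqrt{L}}\Bigr),
\]
whose infimum over $L$ is of order $\eta^{2/3}$, while $H_D(z,w)\asymp\sqrt{\eta}$. The loss comes precisely from bounding $\d_D(\gamma(t))$ by its maximum: a near-optimal curve can raise $\d_D$ substantially along most of its length while keeping $\int|\gamma'_{\gamma}|$ small, and the crude factorization $\int|\gamma'_\gamma|/\sqrt{\d_D(\gamma)}\ge(\int|\gamma'_\gamma|)/\sqrt{\max\d_D(\gamma)}$ does not see this. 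The paper's proof avoids this by first reducing to the explicit shell $D_R=B(0,R)\setminus\overline{B(0,1)}$ (placing a ball in $\C^n\setminus D$ tangent at $\pi(z)$), writing any curve in polar-type coordinates, and then carrying out a case analysis that separately tracks the angular (imaginary-normal) and radial contributions to the integral of $\hat\kappa_{D_R}$; the easy case $|\Re(z-w)_z|\asymp|(z-w)_z|$ is handled by the simpler estimate $k_D\gtrsim|z-w|+|\sqrt{\d_D(z)}-\sqrt{\d_D(w)}|$.
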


In particular,
$k_D(z,w)\gtrsim |z-w|+|\sqrt{\d_D(z)}-\sqrt{\d_D(w)}|$ (see Proposition \ref{lower}). 

\begin{thm}
\label{nonup}
Let $p$ be a non-semipositive point in $\partial D,$
then for $z,w$ close enough to $p$, $k_D(z,w) \lesssim H_D(z,w)$ .
\end{thm}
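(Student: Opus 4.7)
My plan is to construct, for each pair $z, w$ close to $p$, a holomorphic disc (or a short chain of discs) $\mathbb{D}\to D$ sending $0$ to $z$ and some $\lambda_*$ with $|\lambda_*|\lesssim H_D(z,w)$ to $w$; this yields $k_D(z,w)\le \tanh^{-1}|\lambda_*|\lesssim H_D(z,w)$. The geometric engine is the negative Levi eigenvalue at $p$, which provides a unit complex tangent direction $X$ along which $D$ contains a uniformly large disc based at every nearby point, so that $X$-perturbations can be added to any holomorphic disc to absorb positive Levi contributions coming from motion in other directions.

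Fix coordinates at $p=0$ with complex tangent space $\{z_n=0\}$. Choose a unit $X=e_1$ with $\partial\bar\partial r_D(X,\bar X)(p)<0$; by continuity, $\partial\bar\partial r_D(X,\bar X)\le -c<0$ on a small neighborhood $U$ of $p$, while $|\langle\partial r_D(z),X\rangle|\lesssim|z-p|$ there since $X$ is complex tangent at $p$. A first Taylor expansion shows that, for some fixed small $\rho_0>0$, the disc $\psi_z(\lambda):=z+\rho_0\lambda X$ maps $\mathbb{D}$ into $D$ for every $z\in D\cap U$: the term $-c\rho_0^2|\lambda|^2$ dominates the $O(\rho_0|z-p|)$ linear error and the cubic remainder. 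This gives the base estimate $k_D(z,z+\alpha X)\lesssim|\alpha|$ for $|\alpha|\lesssim 1$, which I use to correct the endpoint of the main disc below.

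For the main construction, consider the family
\[
\varphi(\lambda)=z+\lambda\,\frac{w-z}{\lambda_*}+\lambda(\lambda-\lambda_*)\,MX,
\]
parameterized by $\lambda_*\in(0,1)$ and $M\in\mathbb{C}$, for which $\varphi(0)=z$ and $\varphi(\lambda_*)=w$. Writing $\varphi(\lambda)-z=\lambda U_1+\lambda^2 U_2$ with $U_1=(w-z)/\lambda_*-\lambda_*MX$ and $U_2=MX$, and Taylor-expanding $r_D$ at $z$, one isolates positive contributions of size $|(w-z)_z|/\lambda_*$ (from the linear term $\langle\partial r_D(z),U_1\rangle$, using $\partial r_D(z)\approx$ complex normal at $\pi(z)$) and $|z-w|^2/\lambda_*^2$ (from the Hermitian Hessian applied to $(w-z)/\lambda_*$), against absorbing negative terms proportional to $|M|^2$ coming from $\partial\bar\partial r_D(X,\bar X)\le -c$. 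Requiring $|M|\lesssim 1$, so that $\varphi(\mathbb{D})\subset U$, and demanding $r_D(\varphi(\lambda))\le 0$ throughout $\overline{\mathbb{D}}$, including the interior critical points of $r_D\circ\varphi$, leads to the condition $|\lambda_*|\gtrsim H_D(z,w)$ up to constants.

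The main obstacle is the verification that $\varphi(\mathbb{D})\subset D$ uniformly across the closed disc, not only on $\partial\mathbb{D}$: because the absorbing factor $\lambda(\lambda-\lambda_*)MX$ vanishes at $\lambda=0$ and at $\lambda=\lambda_*$, the interior maxima of $r_D\circ\varphi$ must be analyzed carefully, and it is precisely that analysis which generates the $\sqrt{\delta_D(z)}$ term in the denominator of $H_D$ (rather than a bound of the form $|(z-w)_z|$ alone). A further subtlety is the regime $|(z-w)_z|\gg\delta_D(z)$, in which the single disc $\varphi$ above does not achieve the sharp bound and must be replaced by a short chain — for instance a quadratic disc in the complex normal direction to adjust the depth, followed by $X$-moves handled via the base estimate — arranged so that the total Kobayashi cost still matches $H_D(z,w)$.
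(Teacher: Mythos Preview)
Your proposal is a plan rather than a proof: you identify the correct geometric mechanism (the negative Levi eigendirection $X$ absorbs positive Hessian contributions) and the correct two-regime split, but you explicitly leave both hard steps undone. You say that ``the main obstacle is the verification that $\varphi(\mathbb D)\subset D$ uniformly across the closed disc'' and that this verification ``generates the $\sqrt{\delta_D(z)}$ term in the denominator of $H_D$'' --- but you never carry it out. Likewise, in the regime $|(z-w)_z|\gg\delta_D(z)$ you say the single disc must be replaced by ``a short chain\dots arranged so that the total Kobayashi cost still matches $H_D(z,w)$,'' without constructing that chain or checking its cost. These are not minor bookkeeping items; they are the entire content of the theorem. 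In particular, your quadratic correction $\lambda(\lambda-\lambda_*)MX$ vanishes at $\lambda=0$ and $\lambda=\lambda_*$, so near those points the absorbing term is weak while the linear normal drift $|\lambda|\,|(z-w)_z|/\lambda_*$ is already active; controlling the resulting interior maxima with the right dependence on $\delta_D(z)$ and $|(z-w)_z|$ is delicate and you have not shown it can be done with $\lambda_*\asymp H_D(z,w)$.

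The paper's route is different and avoids all of this. It does not build global discs from $z$ to $w$; instead it invokes the infinitesimal estimate $\hat\kappa_D(u;X)\lesssim |X_u|/\sqrt{\delta_D(u)}+|X|$ from \cite{DNT} (whose proof already encodes the absorbing-$X$ idea at the metric level) and then integrates along elementary real paths. When $2B_D(z,w)\le\delta_D(z)$ the straight segment $[z,w]$ stays at depth $\gtrsim\delta_D(z)$ and the metric estimate gives $k_D(z,w)\lesssim B_D(z,w)/\sqrt{\delta_D(z)}$. When $2B_D(z,w)>\delta_D(z)$ one first moves along the normals to $z',w'$ at depth $3B_D(z,w)$ (cost $\int^{3B_D}\!t^{-1/2}\,dt\lesssim\sqrt{B_D}$) and then joins $z'$ to $w'$ at cost $\lesssim|z'-w'|\lesssim\sqrt{B_D}$. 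A short computation then shows both bounds are $\lesssim H_D(z,w)$. This path-integration argument is short and robust; your direct Lempert-function approach, if completed, would give the stronger conclusion $l_D\lesssim H_D$, but the analysis you have not yet done is substantially harder than what the paper needs.
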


In particular, near any non-semipositive point, $k_D(z,w)\asymp H_D(z,w)$ and
$k_D(z,w)\lesssim\sqrt{|(z-w)_z|}+|z-w|$.

Theorems \ref{nonlow} and \ref{nonup} are proved in Section \ref{pftnon}.

\section{Proof of Theorem \ref{thm}}
\label{pftup}

Henceforth, we will always assume that $z,w\in D$ are close enough to $p\in\partial D.$

We start with the proof of the easier converse direction.
\smallskip

\begin{proof*}{\it Proof of Proposition \ref{spc}.} It follows by \cite[Proposition 2]{NP} that
$$\eqref{2}\Rightarrow l_D(z,w)\ge\log(1+cA_D(z,w))\Rightarrow\kappa_D(z;X)\ge c\frac{|X|}{\sqrt{\d_D(z)}}.$$

We will show that the last implies that $p$ is strongly pseudoconvex.

There exist a neighborhood $U$ of $p$ and a
biholomorphism $\Phi:U\to\Phi(U)$ such that, if we write $G=\Phi(D\cap U)$,
and $r_G$ for the signed distance function to $\partial G$, the terms coming
from $\partial^2 r_G$ and $\bar\partial^2 r_G$ vanish in the real Hessian $H(r_G)$ of $r_G$ at $\Phi(p)$,
so that $H(r_G)$ is positive definite. Then one may find $c'>0$ for which
$$
\kappa_G(z;X)\ge c'\frac{|X|}{\sqrt{\d_G(z)}},\quad z\mbox{ near }\Phi(p).
$$

So we can reduce to the case where our domain is $G$, which we denote again by $D$.

Assume that $p$ is not strongly pseudoconvex, that is, $\partial{\overline\partial}r_D
(X,\overline X)(p)\le 0$ for some unit vector $X\in T_p^{\C}(\partial D).$
Denote by $N_p$ the inner normal to $\partial D$ at $p.$ Then
$$
\lim_{N_p\ni z\to p}\frac{\sqrt{\d_D(z)}}{\d_D(z;X)}=0,
$$
where $\d_D(z;X)$ is as in \eqref{dirdist}.

On the other hand,
$$\kappa_D(z;X)\le\frac{1}{\d_D(z;X)}$$
which is a contradiction.
\end{proof*}

We  need to study the behavior of the quantity $A_D$ defined in
\eqref{ADBD} under biholomorphisms and interversion of variables. To do so, it is convenient to 
introduce the following slightly more general quantity:
for any $\mathcal C^{1,1}$-smooth defining function $s$ of $D$ near $p\in\partial D,$
define $A^s_D$ and $B^s_D$ as in \eqref{ADBD}, replacing $\d_D$ by $-s$ throughout.

\begin{prop}\label{c2} $A^s_D\asymp A_D.$
\end{prop}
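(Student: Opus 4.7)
My plan is to compare $s$ directly with the signed distance function, using that $\mathcal C^{1,1}$-regularity of $s$ forces $\partial D$ itself to be $\mathcal C^{1,1}$ near $p$ (by the implicit function theorem, since $\nabla s$ is non-vanishing). Consequently $\delta_D\in\mathcal C^{1,1}$ on a one-sided neighborhood of $\partial D$ near $p$, with $\nabla\delta_D(z)=n_{\pi(z)}$ along each inner normal segment; one also has $\nabla s(q)=-|\nabla s(q)|\,n_q$ for $q\in\partial D$ near $p$, and $|\nabla s|$ bounded above and below by positive constants on a fixed neighborhood of $p$.

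First I would establish the distance comparison $-s(z)\asymp\delta_D(z)$ by applying a second-order Taylor estimate (with Lipschitz gradient) at $q=\pi(z)$, to obtain
$$
-s(z)=|\nabla s(q)|\,\delta_D(z)+O\bigl(\delta_D(z)^2\bigr).
$$
This takes care of the denominator of $A^s_D$ and of the third summand of $B^s_D$. Next, the Lipschitz continuity of $\nabla s$ gives
$$
\nabla s(z)=\nabla s(q)+O(\delta_D(z))=-|\nabla s(q)|\,\nabla\delta_D(z)+O(\delta_D(z)),
$$
hence $\bar\partial s(z)=-|\nabla s(q)|\,\bar\partial\delta_D(z)+O(\delta_D(z))$, and pairing with $z-w$,
$$
\bigl|\,2\langle z-w,\bar\partial(-s)(z)\rangle-|\nabla s(q)|\,(z-w)_z\,\bigr|\lesssim\delta_D(z)\,|z-w|.
$$
Since $|\nabla s(q)|\asymp 1$ and, on a neighborhood with $\delta_D(z)\le 1$, $\delta_D(z)|z-w|\le|z-w|\sqrt{\delta_D(z)}$, the first summand of $B^s_D$ is comparable to $|(z-w)_z|$ up to an error already present in the third summand of $B_D$. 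Putting the summands together gives $B^s_D\asymp B_D$, and dividing by $\sqrt{(-s(z))(-s(w))}\asymp\sqrt{\delta_D(z)\delta_D(w)}$ yields $A^s_D\asymp A_D$.

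The only subtlety is the bookkeeping of the error $\delta_D(z)|z-w|$ and its absorption into $|z-w|\sqrt{\delta_D(z)}$, which forces us to work on a sufficiently small neighborhood of $p$; all other ingredients -- the positive lower bound on $|\nabla s|$, the Lipschitz continuity of $\nabla s$ and $\nabla\delta_D$, and the identity $\nabla\delta_D(z)=n_{\pi(z)}$ -- are immediate from the $\mathcal C^{1,1}$ hypothesis.
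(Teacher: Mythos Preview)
Your argument is correct and follows essentially the same route as the paper: both establish $\bar\partial s(z)=c(z)\,\bar\partial\delta_D(z)+O(\delta_D(z))$ with $c$ bounded away from $0$ and $\infty$, and then absorb the resulting error $\delta_D(z)\,|z-w|$ into the summand $|z-w|\sqrt{\delta_D(z)}$ of $B_D$. The only cosmetic difference is packaging: the paper writes $s=h\,\delta_D$ with $h$ Lipschitz and differentiates the product, whereas you reach the same formula by Taylor-expanding $s$ and $\nabla s$ at the footpoint $\pi(z)$.
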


\begin{proof} Note that $s=h\d_D,$ where $h<0$ is Lipschitz near $p.$ Then
$\bar\partial s=h\bar\partial \d_D +O(\d_D)$ which easily implies that $A^s_D\asymp A_D.$
\end{proof}

\begin{prop}\label{c3} Let $D,G\Subset\C^n$ and $f:\overline D\to\overline G$ be a biholomorphism.
If $\partial D$ is $\mathcal C^{1,1}$-smooth, then for $z,w \in D$,
$$A_D(z,w)\asymp A_G(f(z),f(w)).$$
\end{prop}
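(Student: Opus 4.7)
The plan is to reduce everything to a single pair of corresponding defining functions, chosen so that the denominators match exactly. Take $s := r_D \circ f^{-1}$ as a defining function of $G$. Since $f$ is a biholomorphism between (neighborhoods of) the closures, it is a smooth diffeomorphism with Jacobian $f'$ and inverse Jacobian both bounded above and away from zero on $\overline D$; combined with $r_D\in \mathcal C^{1,1}$, this makes $s$ a $\mathcal C^{1,1}$-smooth defining function of $G$ (and shows in passing that $\partial G$ is $\mathcal C^{1,1}$). By Proposition \ref{c2}, $A_D \asymp A^{r_D}_D$ and $A_G \asymp A^s_G$, so since $s\circ f = r_D$ it suffices to establish
\[
A^{s \circ f}_D(z,w) \asymp A^s_G(f(z), f(w)).
\]

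For this comparison, the denominators $\sqrt{(-s)(f(z))\,(-s)(f(w))}$ and $\sqrt{-(s \circ f)(z) \cdot -(s \circ f)(w)}$ are identically equal, so only the numerators $B^s_G(f(z), f(w))$ and $B^{s \circ f}_D(z,w)$ must be compared. The elementary estimate $|f(z) - f(w)| \asymp |z-w|$ handles the terms $|z-w|^2$ and $|z-w|\sqrt{\delta}$ on the nose, so everything distills to comparing the complex-normal components $(f(z)-f(w))_{f(z)}$ (taken with respect to $s$) and $(z-w)_z$ (taken with respect to $s\circ f$).

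This is the crux of the argument. The Taylor expansion $f(z) - f(w) = f'(z)(z-w) + O(|z-w|^2)$ together with the chain rule
\[
\frac{\partial (s\circ f)}{\partial \bar z_j}(z) = \sum_k \overline{\frac{\partial f_k}{\partial z_j}(z)}\,\frac{\partial s}{\partial \bar w_k}(f(z))
\]
(which uses the holomorphy of $f$ and the realness of $s$) yields the exact identity
\[
\langle f'(z)(z-w), \bar\partial s(f(z))\rangle = \langle z-w, \bar\partial(s\circ f)(z)\rangle.
\]
Since $|\bar\partial s|$ is bounded near $f(p)$, plugging the Taylor expansion into the definition of $(f(z)-f(w))_{f(z)}$ gives
\[
(f(z) - f(w))_{f(z)} = (z-w)_z + O(|z-w|^2),
\]
and this quadratic error is absorbed by the $|z-w|^2$ term already present in both numerators.

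The main (and essentially only) obstacle is the chain-rule identity above: it is what guarantees that the leading complex-normal contribution transforms covariantly under $f$, leaving only a quadratic remainder rather than a linear one (which would be fatal for the argument). Everything else --- the two-sided bound on $f'$, the exact equality of denominators, and the absorption of the Taylor remainder --- is routine once the defining function $s = r_D \circ f^{-1}$ has been chosen.
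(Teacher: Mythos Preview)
Your proof is correct and follows essentially the same route as the paper's: choose $s=r_D\circ f^{-1}$ as a defining function of $G$, use the chain rule together with the Taylor expansion $f(z)-f(w)=f'(z)(z-w)+O(|z-w|^2)$ to show that the complex-normal terms differ by $O(|z-w|^2)$, combine with $|f(z)-f(w)|\asymp|z-w|$ to get $A_D\asymp A^s_G$, and invoke Proposition~\ref{c2}. The only cosmetic difference is that you phrase the chain-rule identity as $\langle f'(z)(z-w),\bar\partial s(f(z))\rangle=\langle z-w,\bar\partial(s\circ f)(z)\rangle$, whereas the paper expresses $\bar\partial s(f(z))$ via $((Jf(z))^{-1})^*\bar\partial\delta_D(z)$; these are the same computation. (A minor remark: the realness of $s$ is not actually needed for your chain-rule formula---holomorphy of $f$ alone gives it.)
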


\begin{proof} Note that $s:=-\d_D\circ f^{-1}$ is a defining function of $G$ and
\begin{multline}
\label{fscal}
\langle f(w)-f(z),\bar\partial s(f(z))\rangle=
\langle Jf(z)(z-w)+O(|z-w|^2),((Jf(z))^{-1})^*\bar\partial \d_D(z)\rangle
\\
=\langle z-w,\bar\partial \d_D(z)\rangle+O(|z-w|^2)
\end{multline}
(the vectors are column vectors).

Then \eqref{fscal} and $|f(z)-f(w)|\asymp |z-w|$ imply that
$A_D(z,w)\asymp A^s_G(f(z),f(w)).$
By Proposition \ref{c2}, we are done.
\end{proof}
\smallskip

\noindent{\it Proof of Theorem \ref{thm}.}  As above, there exist a neighborhood $U$ of $p$ and a
biholomorphism $\Phi:U\to\Phi(U)$ such that $G:=\Phi(D\cap U)$ is a strongly convex domain.

By \cite[Lemma 20]{KNO} (see also Proposition \ref{c3}),
$$
A_D(z,w)=A_{D\cap U}(z,w)\asymp A_G(\Phi(z),\Phi(w)).
$$

On the other hand, by \cite[Theorem 1.1]{NT1},
$$k_D\le k_{D\cap U}<k_D+C,\quad k_{D\cap U}\asymp k_D.$$

So we may replace $D$ by $G,$ which we again relabel $D$.

Then $c_D=k_D=l_D$ by Lempert's theorem and so \eqref{3} follows from Theorem \ref{smooth}.

Now we will prove \eqref{2}. The case $A_D(z,w)\gtrsim 1$ is
covered by the LHS inequality in \eqref{1}.

Further, since $A_D(z,w)\asymp A_D(w,z),$ we may also assume that
$\d_D(w)\ge\d_D(z).$ 
Then, by \cite[p.~633]{NM},
$$
k_D(z,w)\ge\frac{1}{2}\log\left(1+\frac{1}{\d_D(z;z-w)}\right).
$$
Since $D$ can be placed between two balls having $\pi(z)\in\partial D$ as a
boundary point and radii independent of $\pi(z),$ it follows that
$$
\frac{1}{\d_D(z;z-w)}\asymp\hat A_D(z,w):=
\frac{|(z-w)_z|}{\d_D(z)}+\frac{|z-w|}{\sqrt{\d_D(z)}}.
$$
Note that \cite[Proposition 22]{KNO} implies that
$$
\hat A_D(z,w)\asymp\frac{B_D(z,w)}{\d_D(z)}\ge A_D(z,w).
$$

So $k_D(z,w)\gtrsim A_D(z,w)$ if $A_D(z,w)\lesssim 1.$ \qed

\section{Proof of Theorem \ref{smooth}}
\label{pftsm}

Applying a homothety will modify $\chi_{D,p}$
and only change the constants in  \eqref{4}, so we may set $\chi_{D,p}=\chi_0\in(0,1).$

Let $E_{q,r}$ denotes the ball with center
$q_\ast:=q+rn_q$ and radius $r.$ With our choice of $\chi_{D,p}<1$,
there exists a neighborhood
$V$ of $p$ such that for $q\in\partial D$ near $p$, $E_{q,1}\cap V\subset D$.

It is enough to prove \eqref{4} with $A_D$ instead of $A_{D,p}.$

Set $X=z-w$, so that $B_D(z,w)=|X_z|+|X|^2+|X|\sqrt{\delta_D(z)}$.

Since $\d_D(w)\le\d_D(z)+|z-w|$ and
\begin{equation}
\label{zcomp}
|X_w|-|X_z|\le |X_w-X_z| = \left| \langle X, n_{\pi(z)}- n_{\pi(w)}   
\rangle \right|\le(\chi_0+o(1))|X|^2\le|X|^2,
\end{equation}
because $|\pi(z)-\pi(w)| \le (\chi_0+o(1)) |z-w|$.
It follows that
\begin{equation}\label{8}
A_D(w,z)\le 2 A_D(z,w).
\end{equation}

So we may assume for the remainder of the proof that $\d_D(w)\le\d_D(z)$ (note that this 
is the opposite convention to the previous proof). It is enough to show the following.

\begin{claim}
\label{mainclaim}
\begin{enumerate}[(a)]
\item
$k_D(z,w)\le 4A_D(z,w)$ if $2B_D(z,w)\le \sqrt{\d_D(z)\d_D(w)};$
\item
$k_D(z,w)<\log A_D(z,w)+10$ if $2B_D(z,w)>\sqrt{\d_D(z)\d_D(w)}.$
\end{enumerate}
\end{claim}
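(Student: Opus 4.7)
The plan is to exploit the inscribed balls $E_{q,1}\subset D$ (which exist for $q\in\partial D$ near $p$ by the hypothesis $\chi_0<1$). Since the Kobayashi metric of a ball is explicit via M\"obius invariance, the inclusion $E_{\pi(\zeta),1}\subset D$ yields the pointwise infinitesimal bound
\[
\kappa_D(\zeta;Y)\le\kappa_{E_{\pi(\zeta),1}}(\zeta;Y)\lesssim\frac{|Y|}{\sqrt{\d_D(\zeta)}}+\frac{|Y_\zeta|}{\d_D(\zeta)},\quad\zeta\in D\cap U,
\]
to be used in both parts. Write $a=\d_D(z)$, $b=\d_D(w)$, $X=z-w$, keeping in mind $b\le a$.

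For part (a), the hypothesis $2B_D\le\sqrt{ab}$ forces $|X|\le\sqrt b/2$, $|X_z|\le\sqrt{ab}/2\le a/2$, and $|X|^2\le b/4$. Using these smallness bounds together with $|\pi(z)-\pi(w)|\lesssim|X|$, one checks that both $z$ and $w$ lie in the inscribed ball $E_{\pi(w),1}$, so the straight segment $\gamma(t)=z+t(w-z)$ stays in it and $\d_D(\gamma(t))\gtrsim a$ for all $t$; similarly $|X_{\gamma(t)}|\le|X_z|+C\chi_0|X|^2$. Integrating the pointwise bound along $\gamma$,
\[
k_D(z,w)\le\int_0^1\kappa_D(\gamma(t);X)\,dt\lesssim\frac{|X_z|+|X|^2+|X|\sqrt a}{a}=\frac{B_D}{a}\le\frac{B_D}{\sqrt{ab}}=A_D,
\]
the last step using $b\le a$; tracking constants yields the factor $4$.

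For part (b), where $B_D>\sqrt{ab}/2$, the plan is the polygonal path $z\to z^*\to w^*\to w$ with $z^*:=z+\rho n_{\pi(z)}$ and $w^*:=w+\rho n_{\pi(w)}$ for a suitable height $\rho\asymp B_D$. Each normal leg lies in the complex line through $\pi(z)$ (resp.\ $\pi(w)$) along the normal; intersected with $E_{\pi(z),1}$ this complex line becomes a round one-variable disc on which the M\"obius formula gives $k_D(z,z^*)\le\tfrac12\log(1+\rho/a)+O(1)$, and analogously for $w\to w^*$. For the connecting leg, $|z^*-w^*|\lesssim|X|$ and $|(z^*-w^*)_{z^*}|\le|X_z|+\rho|X|^2$; since $\rho\asymp B_D\ge\max(|X_z|,|X|^2)$, integrating the pointwise bound over $[z^*,w^*]$ (inside an inscribed ball at $\pi(z^*)$) gives $k_D(z^*,w^*)=O(1)$. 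Summing the three legs,
\[
k_D(z,w)\le\tfrac12\log\tfrac{\rho}{a}+\tfrac12\log\tfrac{\rho}{b}+O(1)\le\log\tfrac{\rho}{\sqrt{ab}}+O(1)\le\log A_D+10.
\]

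The main obstacle will be the regime $B_D<a$ (which forces $b\ll a$), where the naive choice $\rho=B_D$ leaves $z^*$ too shallow ($\d_D(z^*)\asymp a\gg\rho$) while $\rho=a$ inflates the log-sum beyond $\log A_D$. I expect the resolution to be a one-step bound $k_D(z,w)\le k_D(z,z^*)+k_{E_{\pi(w),1}}(z^*,w)$ with $\rho=a$, evaluating the second term directly from the explicit ball-distance formula and bounding $|1-\langle z^*-q'_*,w-q'_*\rangle|^2$ from below by $(a+b)^2$ plus the squared tangential component of $X$; this produces $\tfrac12\log\bigl(((a+b)^2+B_D^2)/(ab)\bigr)+O(1)\le\log A_D+O(1)$.
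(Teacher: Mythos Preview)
Part (a) and the three-leg scheme for (b) match the paper. The gap in (b) is that your summing step
\[
k_D(z,w)\le\tfrac12\log\tfrac{\rho}{a}+\tfrac12\log\tfrac{\rho}{b}+O(1)
\]
and its comparison with $\log A_D=\tfrac12\log(B_D/a)+\tfrac12\log(B_D/b)$ both tacitly require $\rho\gtrsim a$, i.e.\ $B_D\gtrsim\d_D(z)$; if $\rho\ll a$ the first normal leg is only $O(1)$ while $\tfrac12\log(B_D/a)$ is large negative, and the inequality fails. You sense a difficulty and call it an ``obstacle,'' but the missing ingredient is not a new path---it is the second-order bound on the signed distance coming from the $\mathcal C^{1,1}$ hypothesis (Proposition~\ref{c1} in the paper),
\[
\d_D(z)-\d_D(w)\le\Re X_z+\tfrac12(\chi_0+o(1))|X|^2\le B_D(z,w).
\]
Feeding this into the case-(b) assumption yields $4B_D^2>\d_D(z)\d_D(w)\ge\d_D(z)\bigl(\d_D(z)-B_D\bigr)$, hence $3B_D>\d_D(z)$. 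So $B_D\asymp a$ automatically in case (b); the regime you worry about is empty (and your parenthetical ``forces $b\ll a$'' is also false). With the paper's choice $z'=\pi(z)+3B_D\,n_{\pi(z)}$, $w'=\pi(w)+3B_D\,n_{\pi(w)}$ (both at depth exactly $3B_D>a\ge b$), the two normal legs sum to $\log(3A_D)+o(1)$, the connecting leg is bounded by an explicit constant via the inscribed-ball estimate you already sketch, and no workaround is needed. Your proposed ball-distance alternative could not bypass this step anyway: bounding $(a+b)^2/(ab)\ge a/b$ by a multiple of $A_D^2=B_D^2/(ab)$ again requires $a\lesssim B_D$.

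A minor slip: on the connecting leg one gets $|(z^*-w^*)_{z^*}|\le|X_z|+O(\rho|X|)$, not $|X_z|+\rho|X|^2$, since $|n_{\pi(z)}-n_{\pi(w)}|=O(|X|)$; this is harmless because $\rho|X|\lesssim B_D^{3/2}\le B_D$.
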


To prove (a), let $\gamma$ be the line segment from $z$ to $w$,
explicitly $\gamma(t)=z-tX,$ $t\in[0,1]$, and recall that now $\pi(z)_\ast= \pi(z)+n_{\pi(z)}$. Then
\begin{multline}
\label{estgamma}
1-|\gamma(t)-\pi(z)_\ast|^2
= 1 - \left| \pi(z)+ \delta_D(z) n_{\pi(z)} - tX - (\pi(z)+  n_{\pi(z)})\right|^2
\\
=1 - \left| (1- \delta_D(z)) n_{\pi(z)} + t X \right|^2
\ge 2\d_D(z)-\d^2_D(z)-|X|^2-2(1-\d_D(z))|\mbox{Re} X_z|
\\
\ge 3\d_D(z)/2-2B_D(z,w)\ge\d_D(z)/2;
\end{multline}
hence $\gamma(t)\in D$ and $\d_D(\gamma(t))\ge
\d_{E_{\pi(\gamma(t)),1}}(\gamma(t))>\d_D(z)/4.$

Since $\gamma$ is a competitor for the Kobayashi distance,
$k_D(z,w)\le \int_0^1 \kappa_D(\gamma(t);X)dt $ and there exists $u =\gamma(t_0)$ such that
$$
k_D(z,w)\le\kappa_D(u;X).
$$
Since $E_{\pi(u),1}\cap V\subset D,$ then
\begin{equation}
\label{ballest}
\kappa_D(u;X)\le\frac{1}{\d_D(u;X)}\le
\frac{|X_u|}{\d_D(u)}+\frac{|X|}{\sqrt{\d_D(u)}}\le
\frac{4|X_u|}{\d_D(z)}+\frac{2|X|}{\sqrt{\d_D(z)}}.
\end{equation}
Here we have used \eqref{estgamma} and the fact that $\pi(u) \in (\partial E_{\pi(u),1})\cap(\partial D)$,
and that  $n_{\pi(u)}$ is the common normal unit vector to $\partial E_{\pi(u),1}$ and $\partial D$ 
at that point. Using that
$$
|X_u|-|X_z|\le|X| \cdot |u-z|\le|X|^2,
$$
we conclude that
$$
k_D(z,w)\le\frac{4B_D(z,w)}{\d_D(z)}\le 4A_D(z,w).
$$

To prove (b), we need a technical result about the behaviour of the distance function.
Write $x\cdot y := \sum_{j=1}^m x_j y_j$ for the Euclidean scalar product in $\R^m$.
When $\C^n$ is identified with $\R^{2n}$ in the usual way, this means that
$Z\cdot W = \Re \langle Z,W \rangle$.

\begin{prop}{\cite[Proposition 8]{NT2}}
\label{c1}
 Let $p$ be a $\mathcal C^{1,1}$-smooth boundary point
of a domain $D$ in $\mathbb R^m.$ Then the signed distance $r_D$ to $\partial D$ is a
$\mathcal C^{1,1}$-smooth function near $p.$ Moreover, for $x, y$ near $p$ one has that
$$
|r_D(x)-r_D(y)-\nabla r_D(x)\cdot (x-y)|\le(\chi_{D,p}/2+o(1))(|x-y|^2-(r_D(x)-r_D(y))^2).
$$
\end{prop}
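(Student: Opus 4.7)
The plan is to prove the regularity and the quantitative bound separately. For the \emph{regularity}, I would exploit a tubular neighborhood $U$ of $p$ in which the nearest-point projection $\pi:U\to\partial D$ is uniquely defined and Lipschitz, with $r_D(x)=\varepsilon(x)|x-\pi(x)|$, where $\varepsilon(x)\in\{\pm 1\}$ depending on which side of $\partial D$ contains $x$. A direct calculation then gives $\nabla r_D(x)=\varepsilon(x)\,n_{\pi(x)}$. Since $q\mapsto n_q$ is Lipschitz on $\partial D$ by the very definition of $\chi_{D,p}$ and since $\pi$ is Lipschitz on $U$, it follows that $\nabla r_D\in\mathrm{Lip}(U)$, which is exactly $\mathcal{C}^{1,1}$-smoothness. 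For the sharp quantitative constant, I would use that $|\pi(x)-\pi(y)|/|x-y|\to 1$ as $x,y\to p$ (since $\partial D$ is flatter and flatter on smaller scales), which yields
$$|\nabla r_D(x)-\nabla r_D(y)|=|n_{\pi(x)}-n_{\pi(y)}|\le(\chi_{D,p}+o(1))|x-y|.$$

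For the \emph{quantitative bound}, the idea is to exploit that $r_D$ is affine along every normal line. Given $x,y$ near $p$, decompose
$$y-x=\alpha\,\nabla r_D(x)+v,\quad\alpha:=\nabla r_D(x)\cdot(y-x),\quad v\perp\nabla r_D(x),$$
and set $\tilde x:=x+\alpha\,\nabla r_D(x)$. Because $\pi$ is constant along the normal line through $x$, a direct verification gives $r_D(\tilde x)=r_D(x)+\alpha$ exactly (valid on both sides of $\partial D$), and hence
$$r_D(y)-r_D(x)-\nabla r_D(x)\cdot(y-x)=r_D(y)-r_D(\tilde x).$$
I would then write
$$r_D(y)-r_D(\tilde x)=\int_0^1\nabla r_D(\tilde x+tv)\cdot v\,dt=\int_0^1\bigl[\nabla r_D(\tilde x+tv)-\nabla r_D(\tilde x)\bigr]\cdot v\,dt,$$
where the second equality uses $\nabla r_D(\tilde x)=\nabla r_D(x)\perp v$, and apply the Lipschitz bound from the first paragraph to obtain
$$|r_D(y)-r_D(\tilde x)|\le(\chi_{D,p}/2+o(1))|v|^2.$$

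To conclude, I would relate $|v|^2$ to the right-hand side of the target inequality. By Pythagoras, $|v|^2=|y-x|^2-\alpha^2$, and writing $\alpha=(r_D(y)-r_D(x))-R$ with $R$ the error just bounded (so $|R|=O(|y-x|^2)=o(|y-x|)$), expanding $\alpha^2=(r_D(y)-r_D(x))^2-2(r_D(y)-r_D(x))R+R^2$ produces
$$|v|^2\le(1+o(1))\bigl(|y-x|^2-(r_D(y)-r_D(x))^2\bigr),$$
and the two estimates combine to the claim. The main obstacle is securing the sharp constant $\chi_{D,p}/2$ with only an $o(1)$ correction: a crude Lipschitz bound for $\nabla r_D$ on a fixed tubular neighborhood would yield a strictly larger constant because of the expansive behavior of $\pi$ at positive distance from $\partial D$, so one has to genuinely localize to $p$ and combine the flatness of $\partial D$ near $p$ with the Lipschitz control on $n_q$.
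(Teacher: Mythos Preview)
The paper does not prove this proposition; it is quoted verbatim from \cite[Proposition 8]{NT2} and used as a black box (to derive \eqref{diffbound} and in the proof of Proposition \ref{FDleqHD}). So there is no ``paper's own proof'' to compare against.

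That said, your argument is essentially sound and follows the natural route: exploit that $r_D$ is exactly affine along normal lines to reduce the Taylor remainder to a purely tangential increment, then integrate the Lipschitz bound on $\nabla r_D$ along that tangential segment to pick up the factor $1/2$. The final bootstrap relating $|v|^2=|x-y|^2-\alpha^2$ to $|x-y|^2-(r_D(x)-r_D(y))^2$ works because $|R|=O(|v|^2)$ and $|r_D(x)-r_D(y)|\le|x-y|$, so the cross terms are $o(|v|^2)$.

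Two small corrections. First, $\nabla r_D(x)=-n_{\pi(x)}$ uniformly on the tubular neighborhood (the outer unit normal), not $\varepsilon(x)\,n_{\pi(x)}$; there is no sign flip across $\partial D$. This is cosmetic and does not affect your estimates. Second, the assertion ``$|\pi(x)-\pi(y)|/|x-y|\to 1$'' is imprecise as stated (the ratio is $0$ along a single normal line) and the heuristic ``$\partial D$ is flatter and flatter'' is not the right mechanism. What you actually need---and what the paper itself uses just above \eqref{8}---is that the Lipschitz constant of $\pi$ on shrinking neighborhoods of $p$ tends to $1$; this follows from $\pi(x)=x-r_D(x)\nabla r_D(x)$ together with $|r_D(x)|\to 0$ and the (already established) Lipschitz control on $\nabla r_D$, by a short absorption argument. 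With that in place your chain
\[
|\nabla r_D(x)-\nabla r_D(y)|=|n_{\pi(x)}-n_{\pi(y)}|\le(\chi_{D,p}+o(1))|\pi(x)-\pi(y)|\le(\chi_{D,p}+o(1))|x-y|
\]
is justified, and the rest goes through.
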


Since we have set $\chi_0<1$, this implies
\begin{equation}
\label{diffbound}
\d_D(z)-\d_D(w) \le \Re X_z+(\chi_0/2+o(1))|X|^2\le B_D(z,w).
\end{equation}
Then the assumption on $A_D(z,w)$ implies
$4B_D(z,w)^2>\d_D(z)\d_D(w)$, which because of \eqref{diffbound} is 
$\ge \d_D(z) (\d_D(z)-B_D(z,w))$, from which one deduces $3B_D(z,w)>\d_D(z).$

Let
\begin{equation}
\label{prime}
z'=\pi(z)+3B_D(z,w)n_{\pi(z)}\mbox{ and }
w'=\pi(w)+3B_D(z,w)n_{\pi(w)}\mbox{, then}
\end{equation}
$$
k_D(z,w)\le k_D(z,z')+k_D(w,w')+k_D(z',w').
$$

Choose $r>0$ such that $E_{q,r}\subset D$ for any $q\in\partial D$ near $p.$ Then
an explicit calculation of the invariant distances in the unit ball leads to
$$
k_D(z,z')\le k_{E_{\pi(z)},r}(z,z') \le \log\frac{\sqrt{\d_D(z')}}{\sqrt{\d_D(z)}}+o(1)
= \log\frac{\sqrt{3B_D(z,w)}}{\sqrt{\d_D(z)}}+o(1),
$$
$$k_D(w,w')\le k_{E_{\pi(w)},r}(w,w')\le\log\frac{\sqrt{\d_D(w')}}{\sqrt{\d_D(w)}}+o(1)
= \log\frac{\sqrt{3B_D(z,w)}}{\sqrt{\d_D(w)}}+o(1),$$
and hence
$$
k_D(z,z')+k_D(w,w')\le\log(3A_D(z,w))+o(1).
$$

To estimate $k_D(z',w'),$ set $X'=z'-w'.$
Writing $u'=z'-tX'$, we have as above that
\begin{multline*}
1-|u'-\pi(z)_\ast|^2 = 1 - \left| (1-3B_D)n_{\pi(z)} + t X' \right|^2
 \ge(6+o(1))B_D(z,w)-|X'|^2-2|X'_z|.
\end{multline*}
By using \eqref{diffbound} and $|n_{\pi(z)}-n_{\pi(w)}| = O(|z-w|) = o(1)$, we find
\begin{multline}
|X'-X|= |(z-z')-(w-w')|= \left| (3B_D(z,w)-\d_D(z))(n_{\pi(z)}-n_{\pi(w)}) + 
(\d_D(z)-\d_D(w))n_{\pi(w)}\right|
\\
\le(3B_D(z,w)-\d_D(z))|n_{\pi(z)}-n_{\pi(w)}|+\d_D(z)-\d_D(w)\le(1+o(1))B_D(z,w).
\end{multline}
Since $|X|^2\le B_D(z,w),$ it follows from the triangle inequality that
\begin{equation}\label{5}
|X'|^2\le (1+O(B_D(z,w)^{1/2}))B_D(z,w) = (1+o(1))B_D(z,w).
\end{equation}

Also,
\begin{equation}\label{6}
|X_z'|\le|X_z|+|X-X'|\le(2+o(1))B_D(z,w)
\end{equation}
and so
$$1-|u'-\pi(z)_\ast|^2\ge (6+o(1))B_D(z,w)-|X'|^2-2|X'_z|\ge(1+o(1))B_D(z,w).$$
Hence $u'\in D$ and
\begin{equation}\label{7}
\d_D(u')\ge \d_{E_{\pi(u),1}}(u') \ge (1/2+o(1))B_D(z,w).
\end{equation}

Then as in the proof of (a), one may find $u'\in[z';w']$ such that
$$
k_D(z',w') \le \kappa_D (u'; X')
\le\frac{|X'_{u'}|}{\d_D(u')}+\frac{|X'|}{\sqrt{\d_D(u')}}.
$$
Since $|X'_{u'}|-|X'_{z'}|\le|X'|^2$ and $X'_{z'}=X'_z,$  it follows by \eqref{5},
\eqref{6}, and \eqref{7} that
$$k_D(z',w')<6+\sqrt2+o(1)<10-\log3.$$

So (b) is proved.\qed

\section{Proofs of Theorems \ref{nonlow} and \ref{nonup}}
\label{pftnon}

\subsection{Proof of Theorem \ref{nonup}.}

Using a homothety (thus changing the constants if needed), we may assume that $\chi_{D,p}<1$.

Since $H_D(z,w)\asymp H_D(w,z)$, we may assume that $\d_D(z)\ge\d_D(w)$.

We will show that $k_D(z,w) \lesssim H_D(w,z)$.
By \cite[Proposition 1]{DNT}, near $p,$
\begin{equation}
\label{DNTest}
\hat\kappa_D(u;X)\lesssim\frac{|X_u|}{\sqrt{\d_D(u)}}+|X|.
\end{equation}

Define $z'$ and $w'$ as in \eqref{prime}.
\begin{claim}
\label{claimconcav}
\begin{enumerate}[(a)]
\item
$k_D(z,w)\lesssim\frac{B_D(z,w)}{\sqrt{\d_D(z)}}$ if $2B_D(z,w)\le\d_D(z);$
\item
$k_D(z,w)\le k_D(z,z')+k_D(w,w')+k_D(z',w')\lesssim\sqrt{B_D(z,w)}$ if $2B_D(z,w)>\d_D(z)$.
\end{enumerate}
\end{claim}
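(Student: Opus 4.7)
The plan is to prove both parts by integrating the infinitesimal estimate \eqref{DNTest} along carefully chosen paths, as in the proof of Theorem \ref{smooth}, but with the $1/\d_D(u;X)$ bound coming from interior balls replaced throughout by the milder $|X_u|/\sqrt{\d_D(u)}+|X|$ that is afforded at a non-semipositive point.

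For part (a), I would take the line segment $\gamma(t)=z-tX$, $t\in[0,1]$, with $X=z-w$. The hypothesis $2B_D(z,w)\le\d_D(z)$, combined with the $\mathcal C^{1,1}$ interior ball condition, gives $\d_D(\gamma(t))\gtrsim\d_D(z)$ uniformly in $t$, exactly as in \eqref{estgamma}; Lipschitz continuity of the unit normal then yields $|X_{\gamma(t)}|\le|X_z|+O(|X|^2)$. Substituting into \eqref{DNTest} and integrating,
\[
k_D(z,w)\le\int_0^1\kappa_D(\gamma(t);X)\,dt\lesssim\frac{|X_z|+|X|^2}{\sqrt{\d_D(z)}}+|X|\asymp\frac{B_D(z,w)}{\sqrt{\d_D(z)}}.
\]

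For part (b), I would chain the three listed segments. For $k_D(z,z')$, the hypothesis $2B_D(z,w)>\d_D(z)$ makes the interval $[\d_D(z),3B_D(z,w)]$ nontrivial; parametrize the normal segment as $\gamma(s)=\pi(z)+s\,n_{\pi(z)}$ for $s$ in this interval. Along it $\pi(\gamma(s))=\pi(z)$, so the tangent $n_{\pi(z)}$ has normal component equal to $1$ and $\d_D(\gamma(s))=s$. Then \eqref{DNTest} gives $\kappa_D(\gamma(s);n_{\pi(z)})\lesssim 1/\sqrt{s}+1$, and a direct integration yields
\[
k_D(z,z')\le 2\bigl(\sqrt{3B_D(z,w)}-\sqrt{\d_D(z)}\bigr)+\bigl(3B_D(z,w)-\d_D(z)\bigr)\lesssim\sqrt{B_D(z,w)},
\]
because $B_D(z,w)$ is small. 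The same argument supplies $k_D(w,w')\lesssim\sqrt{B_D(z,w)}$. For $k_D(z',w')$, set $X'=z'-w'$ and use the segment $u'=z'-tX'$; the computations \eqref{5}, \eqref{6} and \eqref{7} from the proof of Theorem \ref{smooth} remain valid and supply $|X'|\lesssim\sqrt{B_D(z,w)}$, $|X'_{u'}|\lesssim B_D(z,w)$, and $\d_D(u')\gtrsim B_D(z,w)$. Substituting into \eqref{DNTest} gives $\kappa_D(u';X')\lesssim\sqrt{B_D(z,w)}$, whence $k_D(z',w')\lesssim\sqrt{B_D(z,w)}$.

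The main obstacle is not really technical---all the required path estimates have already been carried out in Sections \ref{pftup} and \ref{pftsm}---but conceptual: one must recognise that feeding the softer bound \eqref{DNTest} (which is exactly what distinguishes the non-semipositive case from the strongly pseudoconvex one) into the same two-regime path construction used for Theorem \ref{smooth} converts the $\log A_D$ growth into a power-type $\sqrt{B_D(z,w)}$ growth, while the threshold $2B_D(z,w)$ versus $\d_D(z)$ that separates the two regimes is the same as before.
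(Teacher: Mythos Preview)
Your proposal is correct and follows essentially the same route as the paper: for (a) you use the straight segment and replace the interior-ball bound \eqref{ballest} by \eqref{DNTest}, and for (b) you integrate \eqref{DNTest} along the two normal segments and the transversal segment $[z',w']$, exactly as the paper does. The only cosmetic differences are that the paper bounds $|z'-w'|$ by the crude triangle inequality $|z'-z|+|z-w|+|w-w'|\lesssim\sqrt{B_D(z,w)}$ rather than by invoking \eqref{5}, and phrases the third piece simply as $k_D(z',w')\lesssim|z'-w'|$ (which is what your appeal to \eqref{6} and \eqref{7} actually proves once fed into \eqref{DNTest}); also note that the dichotomy threshold here is $\d_D(z)$, not $\sqrt{\d_D(z)\d_D(w)}$ as in Claim~\ref{mainclaim}, so your closing remark that ``the threshold \dots\ is the same as before'' is a slight overstatement.
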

\smallskip

\noindent{\it Proof of Claim \ref{claimconcav}.} We follow the general scheme of the proof of Claim
\ref{mainclaim}, and reuse those of the arguments which only depend on the smoothness of $\partial D$.

To prove (a), we define $X$, $\gamma$, $u$ and $E_{\pi(u),1}$
as before and follow the proof until before \eqref{ballest}.  Instead of \eqref{ballest}, we use \eqref{DNTest},
and since $\d_D(u)\ge \frac14 \d_D(z)$ and $|X_u|-|X_z|\le |X|^2$,
\[
\hat\kappa_D(u;X)\le 2 \frac{|X_z|+|X|^2}{\sqrt{\d_D(z)}}+|X|
\le 2 \frac{B_D(z,w)}{\sqrt{\d_D(z)}}.
\]

To prove (b), we do not need to use \eqref{diffbound} to bound
$B_D(z,w)$ from below, since $2B_D(z,w)>\d_D(z)$ by assumption.

To estimate $k_D(z,z')$ and $k_D(w,w')$, simply integrate
along the normal line to get as upper bound
$c\int_{\d_D(z)}^{3B_D(z,w)} t^{-1/2} dt \lesssim B_D(z,w)^{1/2}$
(the same computation for $w$).

Then we use $k_D(z',w') \lesssim |z'-w'|$, and
by the definition of $B_D(z,w)$,
\[
|z'-w'| \le |z'-z|+|z-w|+ |w-w'|
\le 3 B_D(z,w) + \sqrt{B_D(z,w)} + 3 B_D(z,w) \lesssim \sqrt{B_D(z,w)}.
\]
\qed

\noindent{\it End of the proof of $k_D(z,w) \lesssim H_D(w,z) $.}
In case (a), the assumption implies
$|(z-w)_z|+|z-w|^2 \le \d_D(z)/2$, so that
$\sqrt{|(z-w)_z|}+|z-w| \le \sqrt{\d_D(z)}$, and
\[
\frac{B_D(z,w)}{\sqrt{\d_D(z)}} \le 2 H_D(z,w).
\]

In case (b), the assumption implies
$\sqrt{B_D(z,w)} \gtrsim \sqrt{|(z-w)_z|}+|z-w| + \sqrt{\d_D(z)}$,
so that
\[
\sqrt{B_D(z,w)} \lesssim \sqrt{B_D(z,w)}
\frac{\sqrt{B_D(z,w)}}{\sqrt{|(z-w)_z|}+|z-w| + \sqrt{\d_D(z)}}
\le H_D(z,w).
\]
\qed

\subsection{Proof of Theorem \ref{nonlow}.}

We start with a first lower bound for $k_D$.

\begin{prop}\label{lower}
If $D$ is bounded, then $k_D\gtrsim F_D$
near a $\mathcal C^{1,1}$-smooth point $p\in\partial D,$ where
$F_D(z,w)=|z-w|+|\sqrt{\d_D(z)}-\sqrt{\d_D(w)}|.$
\end{prop}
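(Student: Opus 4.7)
The plan is to split $F_D(z,w)=|z-w|+|\sqrt{\d_D(z)}-\sqrt{\d_D(w)}|$ into its two summands and prove the lower bounds on $k_D$ separately. The first, $k_D(z,w)\gtrsim|z-w|$, is immediate from the boundedness of $D$: fixing $R$ so that $D\subset B(0,R)$, the explicit Kobayashi metric of the ball yields $\kappa_D(\cdot;X)\ge\kappa_{B(0,R)}(\cdot;X)\ge|X|/(2R)$, and integrating along any competitor curve gives $k_D(z,w)\ge|z-w|/(2R)$.

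For the more delicate second summand, my strategy is to first establish a pointwise Kobayashi-Royden lower bound in the real normal direction,
\[
\kappa_D(u;X)\gtrsim\frac{|\Re X_u|}{\sqrt{\d_D(u)}},
\]
uniformly for $u\in D$ close to $p$, where $X_u=\langle X,n_{\pi(u)}\rangle$ is the complex normal component from Definition~\ref{subz}. Given such a bound, for any absolutely continuous $\gamma\colon[0,1]\to D$ from $z$ to $w$, the chain rule combined with Proposition~\ref{c1} (which identifies $\nabla\d_D$ with the unit inner normal) gives
\[
\left|\tfrac{d}{dt}\sqrt{\d_D(\gamma(t))}\right|=\frac{|\Re\langle\gamma'(t),n_{\pi(\gamma(t))}\rangle|}{2\sqrt{\d_D(\gamma(t))}}\lesssim\kappa_D(\gamma(t);\gamma'(t)),
\]
so integrating and taking the infimum over $\gamma$ yields $|\sqrt{\d_D(z)}-\sqrt{\d_D(w)}|\lesssim k_D(z,w)$.

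To produce the pointwise normal lower bound, I would choose coordinates with $p=0$ and $n_p=e_1$, use the $\mathcal C^{1,1}$-regularity to enclose $D\cap U$ in the paraboloidal region $\{\Re z_1>-C(|\hat z|^2+(\Im z_1)^2)\}$, invoke the localization $k_D\asymp k_{D\cap U}$ from \cite{NT1}, and apply the Schwarz-Pick lemma to the first-coordinate projection of any holomorphic disc $\varphi\colon\D\to D\cap U$, bounding $|\varphi_1'(0)|$ in terms of $\sqrt{\d_D(u)}$ at $u=\varphi(0)$.

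The main obstacle I expect is precisely the extraction of the $\sqrt{\d_D}$-scaling from this comparison: a naive comparison with a fixed tangent half-space $\{\Re z_1>-\eps\}$ produces only the weaker bound $\kappa_D\gtrsim|\Re X_u|/(\d_D(u)+\eps)$, which fails to dominate $|\sqrt{\d_D(z)}-\sqrt{\d_D(w)}|$ in the regime $\d_D\ll\eps$. To bridge this gap, I would attempt a Whitney-type dyadic decomposition of $\gamma$ into pieces where $\d_D(\gamma(t))\asymp 2^{-k}$ and apply the half-space bound at the matched scale $\eps\sim 2^{-k}$ on each piece, summing the dyadic contributions to recover the correct $\sqrt{\d_D}$ scaling.
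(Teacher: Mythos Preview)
Your integration scheme is sound: once you have a uniform pointwise bound $\kappa_D(u;X)\gtrsim |\Re X_u|/\sqrt{\d_D(u)}+|X|$ near $p$, integrating against $\tfrac{d}{dt}\sqrt{\d_D(\gamma(t))}$ gives the second summand of $F_D$ exactly as you say. (By the complex homogeneity $\kappa_D(u;e^{i\theta}X)=\kappa_D(u;X)$, your real-part bound is in fact equivalent to $\kappa_D(u;X)\gtrsim |X_u|/\sqrt{\d_D(u)}$.) The gap is in how you propose to establish that pointwise bound.

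Half-space comparisons cannot deliver the $\sqrt{\d_D}$ scaling. If $D\cap U\subset\{\Re z_1>-\eps\}$, then $\eps$ is fixed once and for all by the diameter of $U$ and the curvature of $\partial D$; you cannot make $\eps\sim\d_D(u)$. Your dyadic fix would need, at each scale $2^{-k}$, a half-space at distance $\sim 2^{-k}$ from $u$ containing $D$ (or $D\cap U_k$), but $D$ lies in no half-space close to its tangent hyperplane unless it is convex there, and shrinking $U_k$ with $k$ destroys any uniform localization constant. Likewise, the paraboloidal region $\{\Re z_1>-C(|\hat z|^2+(\Im z_1)^2)\}$ contains affine complex lines (e.g.\ $\{z_1=1\}\times\C^{n-1}$), so its Kobayashi pseudometric is degenerate and Schwarz--Pick on the first coordinate cannot bound $|\varphi_1'(0)|$ in terms of $\sqrt{\d_D(u)}$.

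What delivers the $\sqrt{\d_D}$ scaling---and this is what the paper does---is to combine the two hypotheses. The $\mathcal C^{1,1}$-smoothness provides an exterior tangent ball of fixed radius at every boundary point near $p$, and boundedness provides $D\subset B(0,R)$. After a rigid motion and dilation one then has $D\subset D_R:=B(0,R)\setminus\overline{B}(0,1)$ with the inner sphere touching $\partial D$ at a chosen point $q$. The annulus $D_R$ \emph{is} hyperbolic, and \cite[Proposition~1]{DNT} gives $\hat\kappa_{D_R}(u;X)\gtrsim |X_u|/\sqrt{\d_{D_R}(u)}+|X|$ near the inner boundary. The paper takes $q=\pi(w)$, integrates this estimate along any competitor curve, and uses $\d_D\le|\cdot|-1$ to conclude. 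Alternatively, choosing $q=\pi(u)$ separately for each $u$ yields, via $\kappa_D\ge\kappa_{D_R}\ge\hat\kappa_{D_R}$, exactly the pointwise bound you were after, and then your integration finishes the job. Either way, the annulus---not a half-space or a paraboloid---is the missing comparison domain.
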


\begin{proof}

We will use that $k_G$ is also the integrated form of the Kobayashi-Buseman (pseudo)metric
$\hat\kappa_G$ -- the largest (pseudo)metric not exceeding $\kappa_G$ with convex indicatrices.

Let $D_R:= B(0,R) \setminus \overline B (0,1)$. By \cite[Proposition 1]{DNT},
\begin{equation}
\label{DNTlowest}
\hat\kappa_{D_R}(u;X)\gtrsim\frac{|X_u|}{\sqrt{\d_{D_R}(u)}}+|X|.
\end{equation}

Given a $\mathcal C^2$ point $p$, by continuity of the second derivatives,
there exists a neighborhood $V$ of $p$ and a radius $R_0>0$ such that for any $p'\in V\cap \partial D$,
the ball $B_{p'}:= B(p'-R_0n_{p'}, R_0)$ verifies $B_{p'}\cap D=\emptyset$,
$B_{p'} \cap \overline D=\{p'\}$.
Using that $D$ is bounded, there exist $R>1$ and a neighborhood $U$ of $p$
such that for any $q\in U\cap\partial D,$ after a homothety and a translation, one has that
\begin{equation}
\label{incannu}
D \subset D_R\mbox{ and }q \in \partial B(0,1).
\end{equation}

Consider $z,w\in U$, close enough to $p$ so that  $q:=\pi(w) \in U$. We may assume that $\d_D(z)\ge\d_D(w)$.
We choose and find $D_R$ as above. Since $D_R$ and $D$ must have the same tangent space at $q$,  
in our new coordinates, $q=\frac{w}{|w|}$.

Let $\gamma$ be a curve from $z$ to $w$ in $D$ such that
$\gamma(0)=z$ and $\gamma(1)=w$. Then
\begin{multline*}
\int_0^1 \hat \kappa_{D_R} ( \gamma(t); \gamma'(t) ) dt
\gtrsim \int_0^1 \left(|\gamma'(t)| + \frac{\Re \langle \gamma'(t), \gamma(t) \rangle }
{(|\gamma(t)|^2-1)^{1/2}} \right)dt
=\int_0^1 |\gamma'(t)| dt + \frac12 \int_{|w|^2}^{|z|^2} \frac{dx}{\sqrt{x-1}}
\\ \ge|z-w|+\sqrt{|z|^2-1}-\sqrt{|w|^2-1}\ge|z-w|+\sqrt{\d_D(z)(2+\d_D(z))}-\sqrt{\d_D(w)(2+\d_D(w))}
\end{multline*}
$$\ge|z-w|+\sqrt{2\d_D(z)}-\sqrt{2\d_D(w)}.$$
So $k_D(z,w)\ge k_{D_R}(z,w)\gtrsim|z-w|+\sqrt{\d_D(z)}-\sqrt{\d_D(w)}.$
\end{proof}

It turns out that Proposition \ref{lower} is the ``real'' version of  Theorem \ref{nonlow}.
Indeed, define $H_D^r$ by replacing $(z-w)_z$ by $\Re (z-w)_z$ in equation \eqref{defHD}:
\[
H_D^r(z,w)=\frac{|\Re (z-w)_z|}{\sqrt{|\Re (z-w)_z|}+|z-w|+\sqrt{\d_D(z)}} + |z-w|.
\]

\begin{prop}
\label{FDleqHD}
$F_D\asymp H_D^r\le H_D$ near any $\mathcal C^{1,1}$-smooth point $p\in\partial D.$
\end{prop}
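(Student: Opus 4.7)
My plan is to prove Proposition \ref{FDleqHD} in three pieces: the elementary inequality $H_D^r\le H_D$ by monotonicity, and then the equivalence $F_D\asymp H_D^r$ reduced to pure algebra, with the sole analytic ingredient being Proposition \ref{c1}. The inequality $H_D^r\le H_D$ is immediate because the function $x\mapsto x/(\sqrt{x}+c)$ is increasing in $x\ge 0$ for any fixed $c\ge 0$ (its derivative equals $(\sqrt{x}/2+c)/(\sqrt{x}+c)^2>0$), the two definitions share the identical ``constant'' $c=|z-w|+\sqrt{\d_D(z)}$ while $|\Re(z-w)_z|\le|(z-w)_z|$, and the additive term $|z-w|$ is common.

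For the equivalence $F_D\asymp H_D^r$, the only analytic input is Proposition \ref{c1} applied to the $\mathcal{C}^{1,1}$ signed distance $r_D$ near $p$. Using $\nabla r_D(z)=-n_{\pi(z)}$ one gets $\nabla r_D(z)\cdot(z-w)=-\Re(z-w)_z$, so for $z,w$ close enough to $p$,
\[
|\Re(z-w)_z-(\d_D(z)-\d_D(w))|\le C|z-w|^2.
\]
Adopt the abbreviations $a:=|\Re(z-w)_z|$, $b:=|z-w|$, $d:=\sqrt{\d_D(z)}$, $d':=\sqrt{\d_D(w)}$, and assume without loss of generality that $d\ge d'$. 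The displayed inequality then rewrites as the two-sided bound $a\le (d-d')(d+d')+Cb^2$ and $(d-d')(d+d')\le a+Cb^2$.

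The direction $H_D^r\lesssim F_D$ follows immediately from the first inequality: bounding $\sqrt{a}+b+d\ge d$ on one piece of the numerator and $\ge b$ on the other,
\[
\frac{a}{\sqrt{a}+b+d}\le \frac{(d-d')(d+d')}{d}+\frac{Cb^2}{b}\le 2(d-d')+Cb\lesssim F_D,
\]
and adding the common $|z-w|$ finishes the job. For the reverse $F_D\lesssim H_D^r$, since $|z-w|$ appears on both sides it suffices to show $d-d'\lesssim H_D^r$. The universal estimate $d-d'\le\sqrt{(d-d')(d+d')}\le\sqrt{a+Cb^2}\le\sqrt{a}+\sqrt{C}\,b$ reduces matters to controlling $\sqrt{a}$, and I split three cases: (i) if $\sqrt{a}\ge b+d$, then $H_D^r\ge a/(2\sqrt{a})=\sqrt{a}/2$, so $d-d'\lesssim H_D^r$; (ii) if $\sqrt{a}<b+d$ and $d\le b$, then $d-d'\le d\le b\le H_D^r$; (iii) if $\sqrt{a}<b+d$ and $d>b$, then $\sqrt{a}+b+d\le 4d$, so $a/d\le 4H_D^r$, and dividing $(d-d')(d+d')\le a+Cb^2$ by $d+d'\ge d$ gives $d-d'\le a/d+Cb^2/d\le 4H_D^r+Cb$, where $Cb^2/d\le Cb$ since $d>b$.

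The main obstacle I anticipate is the bookkeeping in that final three-case split and making sure the sign convention ``$d\ge d'$'' is consistently used when pulling out $\d_D(z)-\d_D(w)=(d-d')(d+d')$ from Proposition \ref{c1}; both points are routine but merit attention.
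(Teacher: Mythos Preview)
Your proof is correct and takes essentially the same route as the paper's: both use Proposition~\ref{c1} as the sole analytic input and then reduce to elementary inequalities in $a,b,d,d'$, only organizing the case split a bit differently (you split $F_D\lesssim H_D^r$ into three cases according to the relative sizes of $\sqrt a$, $b$, $d$, while the paper splits into two cases according to whether $\sqrt{\delta_D(z)}+\sqrt{\delta_D(w)}$ is dominated by $\sqrt{|\Re(z-w)_z|}+|z-w|$). One small point worth tightening: your ``without loss of generality $d\ge d'$'' tacitly uses the symmetry $H_D^r(z,w)\asymp H_D^r(w,z)$, since $F_D$ is symmetric but the denominator of $H_D^r$ singles out $\delta_D(z)$; this symmetry holds by exactly the argument the paper gives for $H_D$ just before Theorem~\ref{nonlow} (using \eqref{zcomp} and \eqref{diffbound}), and a one-line reference there would complete your reduction.
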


\begin{proof}
The inequality $H_D^r(z,w)\le H_D(z,w)$ is
equivalent to $|\Re (z-w)_z|\le |(z-w)_z|.$

Further, observe that
$$
F_D(z,w)<\sqrt{\delta_D(z)}+\sqrt{\delta_D(w)}+|z-w|\mbox{ and }\sqrt{|\Re (z-w)_z|}+|z-w|\le H_D^r(z,w).
$$
So, if $\sqrt{\delta_D(z)}+\sqrt{\delta_D(w)}\le |\Re (z-w)_z|+|z-w|,$ then $F_D(z,w)<2H_D^r(z,w).$

Otherwise,
\begin{multline*}
|\sqrt{\delta_D(z)}-\sqrt{\delta_D(w)}|=\frac{|\d_D(z)-\d_D(w)|}{\sqrt{\delta_D(z)}+\sqrt{\delta_D(w)}}
\le\frac{|\Re (z-w)_z|+C^2|z-w|^2}{\sqrt{\delta_D(z)}+\sqrt{\delta_D(w)}}\\
\le\frac{2|\Re (z-w)_z|}{\sqrt{\delta_D(z)}+ \sqrt{|\Re (z-w)_z|}+|z-w|} + C^2|z-w|.
\end{multline*}
where the first inequality follows from Proposition \ref{c1}.

Thus $F_D(z,w)\le\max\{2,C^2+1\}H_D^r(z,w)$ in both cases.

Similarly, by Proposition \ref{c1}, $\sqrt{\delta_D(w)}\le \sqrt{\delta_D(z)}+ \sqrt{|\Re (z-w)_z|}+C|z-w|$, so
$$|\sqrt{\delta_D(z)}-\sqrt{\delta_D(w)}|\ge\frac{|\Re (z-w)_z|-C^2|z-w|^2}{2\sqrt{\delta_D(z)}+
2\sqrt{|\Re (z-w)_z|}+C|z-w|}\ge C^{-1}H_D^r(z,w)-C|z-w|,$$
where $C\ge 2.$ This and $F_D(z,w)\ge |z-w|$ imply that $C(C+1)F_D(z,w)\ge H_D^r(z,w).$
\end{proof}

To get the full statement of Theorem \ref{nonlow}, we reduce ourselves to the case of the set difference of 
two balls, using \eqref{incannu} and the considerations that precede it. This time we will choose coordinates
such that (after translation) $|z| \ge |w|$ and $\pi(z)= \frac{z}{|z|}$, and after an additional
rotation, $\frac{z}{|z|}=(1,0,\dots,0)$.  Notice that then $H_D(z,w)=H_{D_R}(z,w)$, since $n_z$
is the same whether it is taken with respect to $D$ or to $D_R$. Since $k_D(z,w) \ge k_{D_R}(z,w)$,
the estimate we seek reduces to the following.

\begin{prop}
\label{annulow}
Let $z=(1+\eps_1,0)\in D_R \subset \C\times \C^{n-1}$, let $w\in D_R$, with $|w|\le |z|$. Then
$k_{D_R}(z,w)\gtrsim H_{D_R}(z,w)$.
\end{prop}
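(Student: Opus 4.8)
The plan is to prove Proposition \ref{annulow} by a direct computation in the model domain $D_R = B(0,R)\setminus \overline{B(0,1)}$, exploiting the rotational symmetry around the complex line through $z$. Write $w = (w_1, w'') \in \C \times \C^{n-1}$. Since $z = (1+\eps_1, 0)$, we have $\d_{D_R}(z) = \eps_1$ (for $z$ close to $\partial B(0,1)$), and $(z-w)_z = \langle z-w, n_z\rangle = \langle z - w, -(1,0,\dots,0)\rangle$ (up to sign/normalization), so $\Re(z-w)_z$ is controlled by $\Re(1+\eps_1 - w_1) = 1 + \eps_1 - \Re w_1$. The key point is that, because of Proposition \ref{FDleqHD}, it suffices to bound $k_{D_R}(z,w)$ below by $H_{D_R}^r(z,w) \asymp F_{D_R}(z,w) = |z-w| + |\sqrt{\d_{D_R}(z)} - \sqrt{\d_{D_R}(w)}|$; but wait — that is exactly Proposition \ref{lower}, which already gives $k_{D_R} \gtrsim F_{D_R}$. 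So the genuine content of Proposition \ref{annulow} must be recovering the \emph{imaginary} part: the difference between $|(z-w)_z|$ and $|\Re(z-w)_z|$, i.e.\ a lower bound that sees $\Im(z-w)_z = \Im w_1$.

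Thus the first step is to reduce, via Proposition \ref{lower} (applied to $D_R$, which is bounded and has $\mathcal C^{1,1}$ — in fact real-analytic — boundary near $\partial B(0,1)$), to the case where $|z-w|$ and $|\sqrt{\d_{D_R}(z)}-\sqrt{\d_{D_R}(w)}|$ are both small compared to $H_{D_R}(z,w)$; in that regime $H_{D_R}(z,w)$ is dominated by the term $\frac{|(z-w)_z|}{\sqrt{|(z-w)_z|}+|z-w|+\sqrt{\d_{D_R}(z)}}$, and moreover $|\Re(z-w)_z| \ll |(z-w)_z|$, which forces $|\Im(z-w)_z| = |\Im w_1| \asymp |(z-w)_z|$ to be the dominant quantity. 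So we must show $k_{D_R}(z,w) \gtrsim \frac{|\Im w_1|}{\sqrt{|\Im w_1|} + |z-w| + \sqrt{\eps_1}}$, and in the reduced regime the denominator is $\asymp \sqrt{|\Im w_1|}$, so the target is $k_{D_R}(z,w) \gtrsim \sqrt{|\Im w_1|}$.

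The second step is to prove this last bound using a holomorphic map from $D_R$ (or a Kobayashi-monotone argument). The natural candidate is the projection $(\zeta_1, \zeta'') \mapsto \zeta_1$ onto the first coordinate, which maps $D_R$ into the annulus-like planar region $\{ 1/R \le |\zeta_1| \} \cap \{|\zeta_1| \le R\}$... but that is too crude since it loses the angular information. A better device: compose with a branch of $\log$ or with the map $\zeta_1 \mapsto \zeta_1^2$ near $\zeta_1 = 1$, or directly estimate $k_{D_R}$ from below by the Kobayashi distance of a disc tangent internally — more precisely, use that $D_R$ near $(1,0)$ is contained in the exterior of $\overline{B(0,1)}$, and for the exterior of a ball one can compute/estimate $k$ explicitly (it is biholomorphic, via inversion, to a bounded domain, or one can use the known sub-Riemannian CR-geometry of the sphere, cf.\ the Balogh–Bonk type lower bound). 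I would use the inversion $\iota(\zeta) = \zeta/|\zeta|^2$, which sends the exterior of the unit ball to the unit ball and is biholomorphic in a neighborhood; then $k_{D_R}(z,w) \ge k_{\C^n \setminus \overline{B(0,1)}}(z,w) = k_{\B^n \setminus \{0\}}(\iota z, \iota w) \asymp k_{\B^n}(\iota z, \iota w)$ (the removable point changes $k$ by a bounded amount near the boundary; or one stays away from $0$), and then invoke the explicit formula / Balogh–Bonk estimate $k_{\B^n} \asymp \log(1 + A_{\B^n})$ together with \eqref{cc} to extract the Carnot–Carathéodory term $cc(\pi\iota z, \pi\iota w)^2 \asymp |\Im w_1| + \dots$, which gives the $\sqrt{|\Im w_1|}$ lower bound. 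Alternatively and more elementarily, restrict an arbitrary analytic disc $\varphi : \D \to D_R$ with $\varphi(0) = w, \varphi(\alpha) = z$ and look at $\psi := \varphi_1$, the first component: $\psi$ maps $\D$ into $\{|\zeta|>1\}$, $|\psi(0)| = |w_1|$, $\psi(\alpha) = 1+\eps_1$, and one estimates $\tanh^{-1}|\alpha| = k_\D(0,\alpha) \ge k_{\{|\zeta|>1\}}(w_1, 1+\eps_1)$, which by the explicit one-variable formula is $\gtrsim \sqrt{|\Im w_1|}$ when $|\Re w_1 - 1|$ and $\eps_1$ are small — a clean hyperbolic-geometry-of-a-half-plane-type computation after the map $\zeta \mapsto \log \zeta$.

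The main obstacle I expect is the bookkeeping at the reduction step: carefully checking that after subtracting off the part of $H_{D_R}$ already handled by Proposition \ref{lower}, what remains is genuinely controlled by $\sqrt{|\Im w_1|}$ in the relevant regime, and that all the "small compared to" comparisons are consistent (in particular that $|z-w| \asymp |w_1 - 1 - \eps_1|$ up to the tangential part $|w''|$, and that $|w''|^2$ as well as $(\Re w_1 - 1)$ are absorbed by $F_{D_R}$ or by $|z-w|$). A second, more technical point is justifying the passage from $k_{D_R}$ to $k$ of the ball exterior and the removal of the point $0$ — but since we only care about $z,w$ near $(1,0) \in \partial B(0,1)$, which is far from $0$, this is a standard localization (\cite[Theorem 1.1]{NT1}) plus the fact that a single interior puncture is invisible to $k$ at bounded distance from it, or one simply uses the one-variable projection argument which sidesteps the issue entirely. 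I would write it with the one-variable projection to keep the proof elementary and self-contained.
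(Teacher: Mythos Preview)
Your reduction step is essentially the same as the paper's: use Proposition~\ref{lower} (equivalently $k_{D_R}\gtrsim H_{D_R}^r$) to dispose of the case $|\Re(z-w)_z|\asymp |(z-w)_z|$, leaving the regime where $|\Im w_1|\asymp|\eta|$ dominates. However, your Step~2 has a genuine gap, and this is exactly where the difficulty of the proposition lies.

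The one-variable projection $\psi=\varphi_1$ does \emph{not} map $\D$ into $\{|\zeta|>1\}$: from $|\varphi(\lambda)|>1$ you only get $|\varphi_1(\lambda)|^2+|\varphi''(\lambda)|^2>1$, and an analytic disc in $D_R$ can swing into the region where $|\varphi_1|$ is small (even zero) while $|\varphi''|$ is large. So the inequality $k_{D_R}(z,w)\ge k_{\{|\zeta|>1\}}(z_1,w_1)$ is false in general for $n\ge 2$. The inversion route fails for a more basic reason: $\iota(\zeta)=\zeta/|\zeta|^2$ is not holomorphic for $n\ge 2$, and in any case $\C^n\setminus\overline{B(0,1)}$ is not Kobayashi hyperbolic for $n\ge 2$ (Hartogs extension forces $k_{\C^n\setminus\overline B}\equiv 0$), so the bound $k_{D_R}\ge k_{\C^n\setminus\overline B}$ gives nothing. (A minor related slip: in the reduced regime the denominator of $H_{D_R}$ is $\sqrt{|\eta|}+\sqrt{\eps_1}$, not $\sqrt{|\eta|}$ alone; when $|\eta|\le\eps_1$ the target is $|\eta|/\sqrt{\eps_1}$, not $\sqrt{|\eta|}$, and indeed the stronger bound $k_{D_R}\gtrsim\sqrt{|\eta|}$ would contradict Theorem~\ref{nonup} in that range.)

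The paper's proof confronts this obstruction head-on: it parametrises an arbitrary near-minimising curve $\gamma$ in $D_R$ in spherical-type coordinates $(\rho,\theta_1,\alpha,\zeta)$, applies the pointwise lower bound \eqref{DNTlowest} for $\hat\kappa_{D_R}$, and then runs a dichotomy on the set $A\subset[0,1]$ where the normal term $|\theta_1'\cos^2\alpha-\Im\langle\zeta,\zeta'\rangle\sin^2\alpha|$ is large relative to $|\theta_1'|$. When $A$ carries at least half of the $|\theta_1'|$-mass, integrating the normal term alone already gives $\gtrsim\min(\sqrt{|\eta|},|\eta|/\sqrt{\eps_1})$; when it does not, the complementary set forces $|\zeta'|\sin^2\alpha\gtrsim|\theta_1'|$, and the tangential term $|\zeta'\sin\alpha|$ picks up the slack. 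The point is that any attempt to kill the $\theta_1$-variation cheaply (by moving in the $\zeta$-direction) costs in the $|\zeta'\sin\alpha|$ term --- precisely the coupling your projection argument discards.
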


\begin{proof}
Throughout this proof, we will simply write $D_R=D$.

A further rotation within the
plane $\{0\} \times \C^{n-1}$ allows us to write
\newline
$w=(1+\eps_2)(e^{i\eta} \cos \beta, \sin \beta, 0, \dots, 0)\in \C\times\C\times \C^{n-2}$,
with $0<\eps_2\le \eps_1$, and $0\le \beta \le \pi/2$. We only need the estimate for nearby
points, so that we may assume $\beta + (\eps_1 - \eps_2) + |\eta| \asymp |z-w| \ll 1$.

Then $\pi(z)= (1,0)$, $\pi(w)= (e^{i\eta} \cos \beta, \sin \beta,0, \dots, 0)$,
\begin{multline*}
(z-w)_z= (1+\eps_1) - (1+\eps_2)e^{i\eta}\cos \beta =
\\
 (\eps_1 - \eps_2) +
2  (1+\eps_2) (1- 2 \sin^2 \frac\eta2 ) \sin^2 \frac\beta2 + 2 (1+\eps_2) \sin^2 \frac\eta2
+ i (1+\eps_2) \cos\beta \sin \eta.
\end{multline*}

Thus $|\Re (z-w)_z|\asymp  (\eps_1 - \eps_2) + \eta^2 + \beta^2$, $|\Im (z-w)_z|\asymp |\eta|$,
$|(z-w)_z|\asymp (\eps_1 - \eps_2) + |\eta| + \beta^2$.

This implies
\[
H_D(z,w) \asymp
\frac{(\eps_1 - \eps_2) + |\eta|+ \beta^2 + \beta \sqrt{\eps_1}}{ \sqrt{|\eta|}+ \beta +\sqrt{\eps_1}} ,
\]
while
\[
H_D^r(z,w)\asymp  |\eta|+ \beta + \sqrt{\eps_1}
-\sqrt{\eps_2}.
\]

When $|\Re (z-w)_z| \asymp |(z-w)_z|$, then $H_D^r(z,w) \asymp H_D(z,w)$, so we only need
to deal with the cases where this does not happen, i.e.
$(\eps_1 - \eps_2) +  \beta^2 \le c_0 |\eta|$, where $c_0\le 10^{-2}$ is an absolute constant,
the value of which will be fixed later.

With this new requirement,
\[
H_D(z,w) \asymp
\frac{ |\eta|+  \beta \sqrt{\eps_1}}{ \sqrt{|\eta|} +\sqrt{\eps_1}} .
\]
When $ |\eta|\le \eps_1$, this reduces to
$H_D(z,w) \asymp \beta+ \frac{ |\eta|}{\sqrt{\eps_1}}$;
when $ |\eta|\ge \eps_1$, this reduces to
$H_D(z,w) \asymp \beta+ \sqrt{|\eta|}$; so altogether, there are two absolute constants 
$C_1<C_2$ so that for any choice of $c_0\le 10^{-2}$ we have
\begin{equation}
\label{est1hd}
C_1\left(  \beta+ \min \left( \sqrt{|\eta|}, \frac{ |\eta|}{\sqrt{\eps_1}} \right)\right) \le
H_D(z,w) \le C_2\left(  \beta+ \min \left( \sqrt{|\eta|}, \frac{ |\eta|}{\sqrt{\eps_1}} \right)\right).
\end{equation}

Let $\gamma$ be an absolutely continuous curve such that $\gamma(0)=z$, $\gamma(1)=w$.
Suppose that $\gamma$ is close enough
to minimize the integral of the Kobayashi-Busemann metric.
Since for any $t$, $k_D(z,\gamma(t)) \lesssim k_D(z,w)$,
and since $H^r_D \lesssim k_D \lesssim H_D$,
\[
|z-\gamma(t)| + \left| \sqrt{\d_D(z)} - \sqrt{\d_D(\gamma(t))}\right| \lesssim \beta+ 
\min \left( \sqrt{|\eta|}, \frac{ |\eta|}{\sqrt{\eps_1}} \right),
\]
so that for $\eps,\eta$ small enough, $\d_D(\gamma(t))= |\gamma(t)| -1$ and
$\d_D(\gamma(t)) \lesssim \eps_1 +\beta^2+ |\eta|$.

With the above restrictions on the values of $\d_D(\gamma(t))$, we can write
\[
\gamma(t)= (1+\rho(t)) \left( e^{i\theta_1(t)}\cos \alpha(t), \zeta(t)\sin \alpha(t) \right),
\]
where $\rho, \alpha, \theta_1$ are absolutely continuous real-valued functions,
and $\zeta(t)\in \C^{n-1}$, $|\zeta(t)|=1$ for all $t$. All
functions are also a.e. differentiable, and the condition on $\zeta$
implies $\Re \langle \zeta(t), \zeta'(t)\rangle =0$ for all $t$
where it makes sense.

We must have $\rho(t)>0$ for all $t$, $\rho(0)=\eps_1$,
$\rho(1)=\eps_2$, $\theta_1(0)=0$, $\theta_1(1)=\eta$,
$\zeta(1)=(1,0,\dots,0)$, $\alpha(0)=0$, $\alpha(1)=\beta$.

Write $\gamma^*(t):= \pi(\gamma(t))= \gamma(t)/|\gamma(t)|$,
so that again $\Re \langle {\gamma^*}'(t),\gamma^*(t)\rangle =0$.
We compute
\begin{multline}
\label{deriv}
\gamma'(t) = \rho'(t) \gamma^*(t) + (1+\rho(t)) {\gamma^*}'(t)
\\
= \rho'(t) \gamma^*(t) +
(1+\rho(t)) \left(  e^{i\theta_1(t)}( i\theta_1'(t)\cos \alpha(t)-
\alpha'(t) \sin \alpha(t)) ,
 \zeta(t) \alpha'(t) \cos \alpha(t)  +  \zeta'(t) \sin \alpha(t)\right)
\end{multline}
so that
\[
|{\gamma^*}'(t)|^2 = |\theta_1'(t)|^2 \cos^2 \alpha(t) + |\alpha'(t)|^2 + |\zeta'(t)|^2 \sin^2 \alpha(t),
\]
\[
|\gamma'(t)|^2 = \rho'(t)^2
+ (1+\rho(t))^2  \left(  |\theta_1'(t)|^2 \cos^2 \alpha(t) + |\alpha'(t)|^2 + |\zeta'(t)|^2 \sin^2 \alpha(t) \right),
\]
and
\begin{multline*}
\gamma'(t)_{\gamma(t)} = \rho'(t) + (1+\rho(t))
\langle {\gamma^*}'(t),\gamma^*(t)\rangle
\\
= \rho'(t) + i (1+\rho(t))
\left(  \theta_1'(t)\cos^2 \alpha(t)
 -\Im (\langle \zeta(t), \zeta'(t)\rangle) \sin^2 \alpha(t)\right),
\end{multline*}
\[
|\gamma'(t)_{\gamma(t)}|^2 = \rho'(t)^2 +
(1+\rho(t))^2  \left| \theta_1'(t)\cos^2 \alpha(t)
 -\Im (\langle \zeta(t), \zeta'(t)\rangle) \sin^2 \alpha(t) \right|^2.
\]

Because of \eqref{DNTlowest},
\begin{multline}
\label{intest}
k_D(z,w) \gtrsim
\\
\int_0^1
\left( \frac1{\sqrt{\rho(t)}} \left| \theta_1'(t)\cos^2 \alpha(t)
 -\Im (\langle \zeta(t), \zeta'(t)\rangle) \sin^2 \alpha(t) \right|
+ |\theta_1'(t)\cos \alpha(t)| +  |\zeta'(t)\sin \alpha(t)|\right) \, dt
\\
+ \int_0^1 \left(|\alpha'(t)|+ \frac{|\rho'(t) |}{\sqrt{\rho(t)}}\right) \, dt
.
\end{multline}
The second integral is comparable to the total variation of $|\alpha|+\sqrt \rho$, so larger than 
$\beta+ \sqrt{\eps_1}-\sqrt{\eps_2}$.
If the total variation is larger than $c_1 H_D(z,w)$ ($c_1>0$ to be chosen), we are done, 
so assume it is smaller. Since  $\int_0^1 |\alpha'(t)|dt \ge \beta$, then from \eqref{est1hd}
\[
\beta \le c_1  C_2\left(  \beta+ \min \left( \sqrt{|\eta|}, \frac{ |\eta|}{\sqrt{\eps_1}} \right)\right).
\]
Choosing $c_1$ so that $c_1C_2\le \frac12$, we get
$\beta \le 2 c_1  C_2 \min \left( \sqrt{|\eta|}, \frac{ |\eta|}{\sqrt{\eps_1}} \right)$. Therefore
re-applying \eqref{est1hd}, we have
\begin{equation}
\label{est2hd}
H_D(z,w) \le C_2 (1+2c_1) \min \left( \sqrt{|\eta|}, \frac{ |\eta|}{\sqrt{\eps_1}} \right).
\end{equation}
Finally, since the total variation of $|\alpha|+\sqrt \rho$ is bounded by $k_D(z,w)\lesssim H_D(z,w)$,
we can choose $c_1$ so that
\begin{equation}
\label{maxdev}
\max_{t\in [0;1]} (|\alpha(t)|+ \sqrt{\rho(t)})
\le c_1 H_D(z,w) \le \frac12 \min \left( \sqrt{|\eta|}, \frac{ |\eta|}{\sqrt{\eps_1}} \right) \le \frac12 .
\end{equation}

We want to distinguish between cases according to how ``big'' is the set where the first term
in the integrand
is relatively big with respect to an appropriate measure.  More explicitly, let
\[
A:= \left\{ t\in [0;1]:
 \left| \theta_1'(t)\cos^2 \alpha(t)
 -\Im (\langle \zeta(t), \zeta'(t)\rangle) \sin^2 \alpha(t) \right|
\ge  \frac{\sqrt{\rho(t)}}{\max(\sqrt{|\eta|}, \sqrt{\eps_1})} | \theta_1'(t)| \right\},
\]
then let
\[
\mu:= \frac{\int_A  | \theta_1'(t)| \, dt}{\int_0^1  | \theta_1'(t)| \, dt}.
\]

{\bf Case 1: $\mu\ge \frac12$.}

Then
\begin{multline*}
\int_0^1
\frac1{\sqrt{\rho(t)}} \left| \theta_1'(t)\cos^2 \alpha(t)
 -\Im (\langle \zeta(t), \zeta'(t)\rangle) \sin^2 \alpha(t) \right|
\\
\ge
\frac{1}{\max(\sqrt{|\eta|}, \sqrt \eps)} \int_A  | \theta_1'(t)| \, dt
\ge
\frac{1}{2\max(\sqrt{|\eta|}, \sqrt \eps)} \int_0^1 | \theta_1'(t)| \, dt
\\
\ge
\frac{ \eta}{2\max(\sqrt{|\eta|}, \sqrt \eps)} \gtrsim \min (\sqrt{|\eta|}, \frac{|\eta|}{\sqrt \eps}).
\end{multline*}

{\bf Case 2: $\mu< \frac12$.}

Then, writing $A^c:= [0;1]\setminus A$, $\int_{A^c}  | \theta_1'(t)| \, dt \ge \frac12 \int_0^1 | \theta_1'(t)| \, dt$.
When $t\in A^c$,
\[
|\zeta'(t)| \sin^2 \alpha(t)  \ge \left( \cos^2 \alpha(t)-  \frac{\sqrt{\rho(t)}}{\max(\sqrt{|\eta|}, \sqrt \eps)} \right) 
| \theta_1'(t)| \ge \frac14  | \theta_1'(t)|
\]
because of \eqref{maxdev}.  Thus
\[
\int_0^1 |\zeta'(t)| |\sin \alpha(t)| \, dt
\ge \frac1{\max_{t\in [0;1]} |\sin \alpha(t)|} \frac12  \int_{A^c}  | \theta_1'(t)| \, dt
\gtrsim \frac1{\sqrt \eta} \int_0^1 | \theta_1'(t)| \, dt \gtrsim \sqrt \eta,
\]
and we are done.
\end{proof}

\section{Refinements of Theorems \ref{thm} and \ref{smooth}}

Note that if $p$ is a strongly pseudoconvex point of a domain $D$ in $\C^n,$ then
\begin{equation}\label{est}
(1-C\sqrt{\d_D(z)})^2\frac{|X_z|^2}{4\d_D^2(z)}+\frac{C^{-1}|X|^2}{\d_D(z)}\le\kappa^2_D(z;X)\le
(1+C\sqrt{\d_D(z)})^2\frac{|X_z|^2}{4\d_D(z)}+\frac{C|X|^2}{\d_D(z)},
\end{equation}
where $C>1$ and $z\in D$ near $p$ (see e.g.~\cite[Proposition 1.2]{BB}).

So, similarly to \cite[Theorem 1.1]{BB}, it is natural to extend Theorems \ref{thm} and \ref{smooth}
for large classes of Finsler metrics.

Let $\psi:\R^+\to\R$ be such a function that $\int_0^1\frac{|\psi(x)|}{x}dx$ exists, and let $C>0.$
For a $\mathcal C^{1,1}$-smooth boundary point $p$ of a domain $D$ in $\C^n,$ consider a
Finsler pseudometrics $\mathcal F_D:D\times\C^n\to\R^+_0$ such that for $z\in D$ near $p,$
$$
\mathcal F_D(z;X)=(1+\psi(\d_D(z)))\frac{|X_z|}{2\d_D(z)}+\frac{C|X|}{\sqrt{\d_D(z)}}.
$$
Denote by $f_D$ the integrated form of $\mathcal F_D.$

\begin{prop}\label{fin1} There exists $C'>0$ (depending only on $C$ and $\chi_{D,p}$)
such that
$$
f_D(z,w)\le\log(1+C'A_D(z,w)),\quad z,w\in D\mbox{ near }p.
$$
\end{prop}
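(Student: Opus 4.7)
The plan is to adapt the proof of Theorem \ref{smooth} almost verbatim, substituting the Kobayashi--Royden bound from the inscribed ball by the Finsler pseudometric $\mathcal F_D$ itself, and absorbing the extra factor $(1+\psi(\d_D))$ on the complex normal component via the Dini-type hypothesis $\int_0^1|\psi(x)|/x\,dx<\infty$. Using the symmetry of $A_D$ (which depends only on geometric facts about $\d_D$ already established), we may assume $\d_D(w)\le\d_D(z)$ and split as in Claim \ref{mainclaim}.

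In the easy case $2B_D(z,w)\le\sqrt{\d_D(z)\d_D(w)}$, so that $A_D(z,w)\lesssim 1$, we integrate $\mathcal F_D$ along the segment $\gamma(t)=z-tX$. Since $\d_D(\gamma(t))\asymp\d_D(z)$ by \eqref{estgamma} and $\psi$ is bounded on the corresponding subinterval, we obtain $f_D(z,w)\lesssim\frac{|X_z|+|X|^2}{\d_D(z)}+\frac{|X|}{\sqrt{\d_D(z)}}\lesssim A_D(z,w)$, and hence $f_D(z,w)\lesssim A_D(z,w)\lesssim\log(1+C'A_D(z,w))$ for $C'$ large enough.

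In the substantive case $2B_D(z,w)>\sqrt{\d_D(z)\d_D(w)}$, we use the broken path $z\to z'\to w'\to w$ with $z',w'$ as in \eqref{prime}. For the normal segment from $z$ to $z'$, the change of variables $s=\d_D$ gives
$$f_D(z,z')\le\int_{\d_D(z)}^{3B_D(z,w)}\left(\frac{1+\psi(s)}{2s}+\frac{C}{\sqrt{s}}\right)ds\le\frac12\log\frac{3B_D(z,w)}{\d_D(z)}+\frac12\int_0^{1}\frac{|\psi(s)|}{s}\,ds+O(\sqrt{B_D(z,w)}),$$
the middle term being finite by hypothesis; the same bound holds for $f_D(w,w')$. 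For the nearly tangential segment $[z',w']$, the estimates \eqref{5}--\eqref{7} yield $\d_D(u')\gtrsim B_D(z,w)$, $|X'_{u'}|\lesssim B_D(z,w)$, and $|X'|\lesssim\sqrt{B_D(z,w)}$, so $\mathcal F_D(u';X')=O(1)$ at a mean-value point and $f_D(z',w')=O(1)$. Adding the three contributions, $f_D(z,w)\le\log(3A_D(z,w))+O(1)\le\log(1+C'A_D(z,w))$.

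The main step requiring care is the normal-direction integration: the Dini hypothesis is precisely what converts the $\psi$-contribution into a bounded additive constant, leaving intact the $\tfrac12\log(B_D/\d_D)$ structure whose sum across the two normal pieces produces the desired $\log A_D$ on the right-hand side. A minor subtlety appears in bounding $f_D(z',w')$ when $\psi$ is unbounded near $0$; this is benign provided $\psi$ is bounded on compact subintervals of $(0,\infty)$, which is automatic in the motivating example $\psi(x)=C\sqrt{x}$ arising from \eqref{est}, and otherwise can be dodged by inserting a short detour to a fixed normal depth at $O(1)$ cost.
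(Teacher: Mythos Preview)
Your proposal is correct and follows exactly the paper's approach: adapt Claim \ref{mainclaim} with $\mathcal F_D$ in place of the Kobayashi--Royden metric, the only new input being that integrability of $\psi(x)/x$ yields $f_D(z,z')\le\bigl|\tfrac12\log\tfrac{\d_D(z')}{\d_D(z)}\bigr|+o(1)$ along normal segments. The paper's own proof is in fact a two-line sketch that singles out precisely this normal-direction estimate and leaves the rest implicit, so your write-up is strictly more detailed (and you even flag the local-boundedness caveat on $\psi$, which the paper does not mention).
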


The proof follows similar lines to that of Theorem \ref{smooth}. The only point
we have to mention is that the integrability of $\psi(x)/x$ implies that
if $z,z'\in D$ near $p$ with $\pi(z)=\pi(z'),$ then
$$
f_D(z,z')\le\left|\log\frac{\sqrt{\d_D(z')}}{\sqrt{\d_D(z)}}\right|+o(1)
$$
(by integrating $\mathcal F_D$ on the segment $[z,z']$).

Note that Proposition \ref{fin1} has an obvious  analogue for Finsler metrics in a
domain of $\mathbb R^n$ which satisfy the hypothesis of Proposition \ref{fin1} with
the Hermitian product replaced by the ordinary scalar product.

Let now $\mathcal G_D:D\times\C^n\to\R^+_0$ be a Finsler
pseudometric such that for $z\in D$ near $p,$
$$
\mathcal G_D(z;X)=\max\Bigl\{(1+\psi(\d_D(z)))\frac{|X_z|}{2\d_D(z)}, \frac{C|X|}{\sqrt{\d_D(z)}}\Bigr\}.
$$
Denote by $g_D$ the integrated form of $\mathcal G_D.$

\begin{prop}\label{fin2}
If $p$ is strongly pseudoconvex, then there exists $c'>0$ such that
$$g_D(z,w)\ge\log(1+c'A_D(z,w)),\quad z,w\in D\mbox{ near }p.$$
\end{prop}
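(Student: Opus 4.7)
The plan is to mirror the proof of the lower bound in \eqref{2} from Theorem \ref{thm} (Section \ref{pftup}), substituting a direct integration of $\mathcal G_D$ for the appeal to \cite{NM}. First I apply the biholomorphism $\Phi$ constructed there to reduce to the case where $D$ is strongly convex near $p$; under $\Phi$ the metric $\mathcal G_D$ pulls back to a metric of the same form, with a modified $\psi$ (picking up an $O(\sqrt{\d_D})$ term that preserves integrability of $|\psi|/x$ near $0$) and modified $C$, by the comparability of $|X_z|$, $|X|$, and $\d_D$ under biholomorphism (equation \eqref{fscal} and Propositions \ref{c2}--\ref{c3}). Using $A_D(z,w)\asymp A_D(w,z)$ I then assume $\d_D(w)\le\d_D(z)$.

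The core estimate is that, for any absolutely continuous curve $\gamma:[0,1]\to D$ from $z$ to $w$,
\[
\int_0^1 \mathcal G_D(\gamma(t);\gamma'(t))\,dt \;\gtrsim\; \log(1+A_D(z,w)) - C_\psi,
\]
with $C_\psi=O(1)$ depending only on $\psi$ and $C$; absorbing $-C_\psi$ into the constant $c'\le e^{-C_\psi}$ inside the logarithm yields the proposition. To prove this, I use the $\C$-linear function $\ell(p):=2\langle p-\pi(z),\bar\partial\d_D(z)\rangle$; strong convexity guarantees $\ell(D)\subset\{\Re>0\}$ and $\d_D(p)\le\Re\ell(p)\le\d_D(p)+O(|p-\pi(z)|^2)$ near $p$. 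Since $(\ell\circ\gamma)'(t)=\gamma'(t)_z$ and $|\gamma'_\gamma-\gamma'_z|=O(|\gamma'|\cdot|\gamma-z|)$ by $\mathcal C^{1,1}$-regularity,
\[
\int_0^1 (1+\psi(\d_D(\gamma)))\frac{|\gamma'_\gamma|}{2\d_D(\gamma)}\,dt \;\gtrsim\; \int_0^1 \frac{|(\ell\circ\gamma)'|}{2\Re(\ell\circ\gamma)}\,dt \;-\; C_\psi \;-\; \mathrm{error},
\]
where $-C_\psi$ absorbs the bounded contribution of the $\psi$-term (parametrize by $\Re\ell$ and use $\int_0^1|\psi|/x\,dx<\infty$) and $\mathrm{error}$ collects the $\gamma'_\gamma-\gamma'_z$ discrepancy. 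The first integral on the right is bounded below by the Kobayashi distance $k_{\mathrm{RHP}}(\ell(z),\ell(w))\asymp\log(1+|(z-w)_z|/\sqrt{\d_D(z)\d_D(w)})$ with coefficient~$1$. The analogous argument using the piece $\mathcal G_D\ge C|X|/\sqrt{\d_D}$ and a tangential 1D projection yields the complementary term $\log(1+|z-w|/\sqrt{\d_D(z)})$, and \cite[Proposition~22]{KNO} provides $\hat A_D\gtrsim A_D$, so the sum dominates all three summands of $\hat A_D$.

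The main obstacle is controlling the $\mathrm{error}$ term $O(\int|\gamma'|\cdot|\gamma-z|/\d_D)$, which a priori can accumulate over long detours of $\gamma$. The second piece $C|X|/\sqrt{\d_D}$ of $\mathcal G_D$ provides the remedy: any $\gamma$ long enough for the error to spoil the estimate already has $C\int|\gamma'|/\sqrt{\d_D}\gtrsim\log(1+c'A_D)$ on its own, so the inequality is free in that regime. A secondary subtlety is that the coefficient in front of the logarithm must be genuinely $1$ (not $(1-\eps)$, which would be insufficient for $A_D\to\infty$): this is ensured because the $(1+\psi)$ factor, though not pointwise bounded below by $1$, contributes only the bounded \emph{additive} error $C_\psi$ on the scale of integrated metrics, not a multiplicative loss. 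The interplay between the normal and tangential pieces of $\mathcal G_D$ exhibited here parallels Balogh--Bonk's joint use of the two infinitesimal lower bounds in \eqref{est}.
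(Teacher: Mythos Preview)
Your approach is quite different from the paper's, which is much shorter: the paper splits into the regimes $A_D\gtrsim 1$ and $A_D\lesssim 1$. For $A_D\gtrsim 1$ it invokes \cite[Theorem~1.1]{BB} (checking that the one place where Balogh--Bonk use the special form $\psi(t)=Ct^s$, namely \cite[Lemma~4.1]{BB}, goes through verbatim for any $\psi$ with $\int_0|\psi(x)|/x\,dx<\infty$). For $A_D\lesssim 1$ it simply observes that \eqref{est} gives $\mathcal G_D\asymp\kappa_D$ near $p$, whence $g_D\gtrsim k_D$ and \eqref{2} finishes the job. Your proposal, by contrast, attempts to redo the Balogh--Bonk lower bound from scratch via a half-plane projection.

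There is a genuine gap in your handling of the $\psi$-term. You assert that the contribution $\int_0^1 \psi(\d_D(\gamma))\,\dfrac{|\gamma'_\gamma|}{2\d_D(\gamma)}\,dt$ is bounded by a constant $C_\psi$ depending only on $\int_0^1|\psi|/x\,dx$, by ``parametrizing by $\Re\ell$''. This fails: the integrand involves $|\gamma'_\gamma|$, which dominates $|d\d_D(\gamma)/dt|$ but is not controlled by it (the imaginary part of $\gamma'_\gamma$ can be large), and $\d_D(\gamma)$ need not be monotone along $\gamma$. Concretely, if $\psi\equiv -2$ on some interval $[a,2a]$ (which is compatible with integrability of $|\psi|/x$), a curve oscillating $N$ times along the normal between depths $a$ and $2a$ makes the first term of $\mathcal G_D$ contribute $-N\log 2$ to the integral, while the second term contributes only $O(N\sqrt a)$. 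So the lower bound $\int\mathcal G_D\ge\int(\text{first term})$ is useless on such curves, and neither your $C_\psi$ mechanism nor your dichotomy (``long curves already have large tangential integral'') rescues it. The paper avoids this entirely by reducing to $\kappa_D$ in the small-$A_D$ regime and by delegating the large-$A_D$ regime to the full Balogh--Bonk machinery rather than a one-step projection.

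A secondary issue: your ``tangential 1-D projection'' is asserted to produce $\log(1+|z-w|/\sqrt{\d_D(z)})$, but $\int C|\gamma'|/\sqrt{\d_D}$ gives no logarithm on its own once the curve is allowed to dive to large depth; and your half-plane endpoint satisfies $\Re\ell(w)=\d_D(w)+O(|z-w|^2)$, not $\asymp\d_D(w)$, so $k_{\mathrm{RHP}}(\ell(z),\ell(w))$ does not in general recover the factor $\sqrt{\d_D(w)}$ in the denominator of $A_D$. Reconciling these two pieces is precisely the delicate part of \cite{BB} that the paper cites rather than reproves.
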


\begin{proof} Having in mind \eqref{cc},
in the case where $\psi(t)=Ct^s$, $s>0$, \cite[Theorem 1.1]{BB} shows that
$g_D(z,w)\ge g(z,w)-C''$, where $g$ is a quantity which by \eqref{cc} can be seen
to satisfy $g(z,w)= \log(1+A_D(z,w))+O(1)$.
This implies the  bound we want when $A_D(z,w)\gtrsim 1$.
It remains to see that we can extend this to our general case.
The only place where the special form of $\psi$ is used is \cite[Lemma 4.1]{BB},
and retracing the computations in the proof of that Lemma \cite[p. 517]{BB}
we find
$$
g_D(z,w)\ge \left|\log\frac{\sqrt{\d_D(z')}}{\sqrt{\d_D(z)}}\right|
- \int_0 \frac{|\psi(t^2)|}{t} dt,
$$
and this last integral converges by our hypothesis on $\psi$.

Finally, by \eqref{est}
$\mathcal G_D\asymp\kappa_D$. Hence \eqref{2} implies that
$g_D(z,w)\gtrsim  A_D(z,w)$ if $A_D(z,w)\lesssim 1$.
\end{proof}

\noindent{\bf Acknowledgements.} We wish to thank \L ukasz Kosi\'nski for useful 
discussions about the topic of this work.

\end{document}